\newtheorem{thm}{Theorem}[section]
\newtheorem{cor}[thm]{Corollary}
\newtheorem{lem}[thm]{Lemma}
\newtheorem{prop}[thm]{Proposition}
\theoremstyle{definition}
\newtheorem{defn}[thm]{Definition}
\theoremstyle{remark}
\newtheorem{rem}[thm]{Remark}
\numberwithin{equation}{section}
\newcommand{\vertiii}[1]{{\left\vert\kern-0.25ex\left\vert\kern-0.25ex\left\vert #1 
    \right\vert\kern-0.25ex\right\vert\kern-0.25ex\right\vert}}
\begin{document}


\baselineskip=17pt


\title[]{Dunkl paraproducts and fractional Leibniz rules for the Dunkl Laplacian}

\author[THE ANH BUI]{The Anh Bui}
\address{School of Mathematical and Physical Sciences, Macquarie University, NSW 2109,
	Australia}
\email{the.bui@mq.edu.au}

\author[SUMAN MUKHERJEE]{Suman Mukherjee$^\dagger$}
\address{Department of Mathematics, Indian Institute of Technology Bombay, Powai, Mumbai--400076, India.}
\email{sumanmukherjee822@gmail.com}

\date{}

\begin{abstract}
We establish fractional Leibniz rules for the Dunkl Laplacian $\Delta_k$ of the form  
$$\|(-\Delta_k)^s(fg)\|_{L^p(d\mu_k)} \lesssim \|(-\Delta_k)^s f\|_{L^{p_1}(d\mu_k)} \|g\|_{L^{p_2}(d\mu_k)} + \|f\|_{L^{p_1}(d\mu_k)} \|(-\Delta_k)^s g\|_{L^{p_2}(d\mu_k)}.$$  
Our approach relies on adapting the classical paraproduct decomposition to the Dunkl setting. In the process, we develop several new auxiliary results. Specifically, we show that for a Schwartz function $f$, the function $(-\Delta_k)^s f$ satisfies a pointwise decay estimate; we establish a version of almost orthogonality estimates adapted to the Dunkl framework; and we investigate the boundedness of Dunkl paraproduct operators on the Lebesgue spaces.
\end{abstract}

\subjclass[2010]{Primary  42B20, 42B25; Secondary 26A33, 44A15}
\keywords{Fractional Dunkl Laplacian, Almost orthogonality estimates, Bilinear Dunkl-Calder\'on-Zygmund theory, Dunkl paraproducts}
\thanks{$\dagger$ \tt{Corresponding author}}
\maketitle
\pagestyle{myheadings}
\markboth{THE ANH BUI AND SUMAN MUKHERJEE}{DUNKL PARAPRODUCTS AND FRACTIONAL LEIBNIZ RULES}


\section{Introduction and statement of the main results}\label{sec intro}

Let $\Delta$ denote the Laplacian operator on $\mathbb{R}^d$. Then for any function $f$ in the Schwartz space $\mathcal{S}(\mathbb{R}^d)$, and for any $s>0$, the fractional powers of the Laplacian is defined by the relation 
\begin{equation}\label{defn of usual frac powers of laplacian}
(-\Delta)^{s}f(x)=\int_{\mathbb{R}^d}|\xi|^{2s}\,\mathcal{F}f(\xi)\,e^{\langle i\xi,\,x\rangle}\,d\xi,
\end{equation}
 where $\mathcal{F}f$ denote the Fourier transform of the function $f$ on $\mathbb{R}^d$.
An estimate of the form
\begin{equation}\label{classical Koto_ponce ineq}
\|(-\Delta)^s(fg)\|_{L^{p}} \lesssim \|(-\Delta)^s f\|_{L^{p_1}}\, \|g\|_{L^{p_2}} + \|f\|_{L^{\widetilde{p}_1}}\, \|(-\Delta)^s g\|_{L^{\widetilde{p}_2}}
\end{equation}
is known as the \emph{fractional Leibniz rule} or \emph{the Kato-Ponce inequality}. Here, $f$ and $g$ $\in \mathcal{S}(\mathbb{R}^d)$, and the indices satisfy
$1 < p,\, p_1,\, p_2,\, \widetilde{p}_1,\, \widetilde{p}_2 < \infty$, with  $1/p=1/p_1+1/p_2=1/\widetilde p_1 + 1/\widetilde p_2$, and either $2s > \max\{0,\, d/p - d\}$ or $s \in \mathbb{Z}^+$. The first version of this inequality appeared in the work of Kato–Ponce \cite{KatoCEATEANSE}. After that, Gulisashvili–Kon \cite{GulisashviliESPOSS} proved it for the range $p > 1$. The range $1/2 < p \leq 1$ was later treated by Grafakos-Oh \cite{GrafakosTKPI}. The proof of this inequality mainly relies on the Coifman-Meyer bilinear multiplier theorem \cite{CoifmenNHAOTAPIBL}. A different proof was later given by Muscalu-Pipher-Tao-Thiele \cite{MuscaluBPP} (see also \cite{MuscaluBook2CAMHA}), using paraproduct decomposition.
\par The inequality \eqref{classical Koto_ponce ineq}, along with its various forms, has become a fundamental tool in the analysis of several nonlinear partial differential equations. These include the Euler and Navier-Stokes equations \cite{KatoCEATEANSE}, the KdV equation \cite{ChristDOSASOTGKDVE, KenigWPASRFTGKDVEVTCP}, and studies on the smoothing effects of Schr\"odinger semigroups \cite{GulisashviliESPOSS}. Due to its wide applicability, the Kato-Ponce inequality has attracted considerable attention and has been investigated by numerous authors across a broad range of different settings. We present below a selected overview of significant developments in the relevant literature:
\begin{itemize}
    \item The endpoint Kato-Ponce inequality was studied by Grafakos-Maldonado-Naibo \cite{GrafakosARONEKPI} and Bourgain-Li \cite{BourgainOAEKPI}.
    \item Extensions to weighted and variable Lebesgue spaces were considered by Cruz-Uribe-Naibo \cite{CruzuribeKPIOWAVLS}.
    \item The inequality in Besov spaces was explored by Brummer-Naibo \cite{BrummerBOWHSSMAKPI}.
    \item A higher-order version of the fractional Leibniz rule was established by Fujiwara-Georgiev-Vladimir \cite{FujiwaraHOFLR}.
    \item An improved variant of the Kato-Ponce inequality was introduced by Li \cite{LiOKPAFL}.
    \item The $L^1$ endpoint case was addressed by Oh-Wu \cite{OhOLEPKPI}.
    \item Weighted endpoint versions were studied by Wu \cite{WuWEFLR}.
    \item Kato-Ponce inequalities with polynomial weights were investigated by Oh-Wu \cite{OhTKPIWPW}.
    \item Fractional Leibniz rules associated with bilinear Hermite pseudo-multipliers were examined by Ly-Naibo \cite{LyFLRATBHPM}.
    \item Generalizations to quasi-Banach function spaces were considered by Hale-Naibo \cite{HaleFLRITSOQBFS}.
    \item Fractional Leibniz-type rules on spaces of homogeneous type were analyzed by Liu-Zhang \cite{LiuFLROSOHT}.
    \item Weighted Kato-Ponce inequalities for multiple factors were developed by Douglas-Grafakos \cite{DouglasWKPIFMF}.
    \item Fractional Leibniz rule on the torus was studied by B\'{e}nyi-Oh-Zhao \cite{BenyiFLROTT}.
    \item Weighted biparameter versions were introduced by Hale-Naibo \cite{HaleWBFLR}.
    \item In the setting of Carnot groups, the theory was explored by Fang-Li-Zhao \cite{FangTFLROTPSOCG}.
\end{itemize}

\par We now shift our focus in a different direction. Over the past 35 years, a parallel framework to classical Fourier analysis has emerged in Euclidean spaces, known as \emph{analytic Dunkl theory}. In this setting, Fourier analysis is developed with respect to a root system $R$, a corresponding reflection group $G$, and a multiplicity function $k$ (see Section~\ref{sec prel} for notation and definitions). The Dunkl transform of a function $f \in \mathcal{S}(\mathbb{R}^d)$ is defined analogously to the classical Fourier transform, and is given by
$$\mathcal{F}_k f(\xi) = \int_{\mathbb{R}^d} f(x) E_k(-i\xi, x)\, d\mu_k(x),$$
where $E_k$ denotes the Dunkl kernel and $\mu_k$ is the associated measure. When the multiplicity function $k$ is identically zero, $\mathcal{F}_k$ reduces to the classical Fourier transform $\mathcal{F}$. In fact, when $k \equiv 0 $, the Dunkl setting reduces to the classical case. Thus, the classical setting can be viewed as a special case of the Dunkl framework.
\par Let $\Delta_k$ denote the Dunkl Laplacian on $\mathbb{R}^d$. Then similar to the classical case, in view of the identity $\mathcal{F}_k(\Delta_k f)(\xi) = -|\xi|^2 \mathcal{F}_k f(\xi)$, the fractional powers of the Dunkl Laplacian can be defined for any $s > 0$ and $f \in \mathcal{S}(\mathbb{R}^d)$ as
$$(-\Delta_k)^{s}f(x)=\int_{\mathbb{R}^d}|\xi|^{2s}\,\mathcal{F}_kf(\xi)E_k(i\xi,x)\,d\mu_k(\xi).$$

\par Naturally, a fractional Leibniz rule in this context has also been studied. In fact, when the associated reflection group $G$ is isomorphic to $\mathbb{Z}_2^d$, the following result was established by Wr\'{o}bel.
\begin{thm}\cite[Corollary 4.3]{WrobelABMVAFC}\label{wrobels thm}
Let $1 < p, p_1, p_2 < \infty$ be such that $1/p = 1/p_1 + 1/p_2$. Then for any $s > 0$ and $f,\, g \in \mathcal{S}(\mathbb{R}^d)$, with at least one of $f$ or $g$ being $\mathbb{Z}_2^d$-invariant, we have
 \begin{eqnarray*}
||(-\Delta_k)^s(fg)||_{L^{p}(d\mu_k)} &\lesssim & ||(-\Delta_k)^s(f)||_{L^{p_1}(d\mu_k)}\, ||g||_{L^{p_2}(d\mu_k)}
+\  ||f||_{L^{ p_1}(d\mu_k)}\, ||(-\Delta_k)^s(g)||_{L^{ p_2}(d\mu_k)}.
\end{eqnarray*}
\end{thm}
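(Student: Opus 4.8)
The plan is to use the hypothesis that one factor is $\mathbb{Z}_2^d$-invariant to reduce the estimate to the boundedness of a bilinear Dunkl multiplier carrying the Kato--Ponce symbol, and then to run a paraproduct argument. Assume without loss of generality that $g$ is $\mathbb{Z}_2^d$-invariant. Invariance enters in two ways. First, the difference part of each Dunkl operator $T_i$ annihilates $g$, so $T_ig=\partial_ig$ and the $T_i$ obey the ordinary Leibniz rule $T_i(fg)=(T_if)g+f(T_ig)$; iterating this collapses the second-order structure to the nearly classical identity $\Delta_k(fg)=(\Delta_kf)\,g+2\langle\nabla f,\nabla g\rangle+f\,\Delta g$. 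Second, and decisively, the generalized Dunkl translation is uniformly bounded on $L^p(d\mu_k)$ once it acts on a $\mathbb{Z}_2^d$-invariant function (via the product structure of the $\mathbb{Z}_2^d$ case together with the one-dimensional results of R\"osler and Thangavelu--Xu). This makes the Dunkl convolution of $\mathcal F_kf$ with the invariant factor $\mathcal F_kg$ well defined on the frequency side, so that $\mathcal F_k(fg)$ may be written as such a convolution and $(-\Delta_k)^s(fg)$ realized as a bilinear multiplier
$$ \mathcal T_m(f,g)(x)=\int_{\mathbb{R}^d}\!\int_{\mathbb{R}^d} m(\xi,\eta)\,\mathcal F_kf(\xi)\,\mathcal F_kg(\eta)\,K(\xi,\eta,x)\,d\mu_k(\xi)\,d\mu_k(\eta), $$
with $m(\xi,\eta)=|\xi+\eta|^{2s}$ the Kato--Ponce symbol and $K$ the Dunkl-kernel factor carrying the translation structure supplied by the invariant argument.

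Next I would decompose $m$ by a Littlewood--Paley partition adapted to $-\Delta_k$, splitting the frequency domain according to $|\xi|\gg|\eta|$, $|\xi|\ll|\eta|$, and $|\xi|\sim|\eta|$, to obtain $\mathcal T_m=\Pi_1+\Pi_2+\Pi_3$. Away from the diagonal $\xi\sim-\eta$ the symbol $|\xi+\eta|^{2s}$ is a smooth Coifman--Meyer-type multiplier homogeneous of degree $2s$, so on the high--low and low--high regions $\Pi_1$ and $\Pi_2$ reduce, modulo bounded paraproducts, to $\big((-\Delta_k)^sf\big)\,g$ and $f\,\big((-\Delta_k)^sg\big)$; H\"older's inequality with $1/p=1/p_1+1/p_2$ then produces exactly the two terms $\|(-\Delta_k)^sf\|_{L^{p_1}}\|g\|_{L^{p_2}}$ and $\|f\|_{L^{p_1}}\|(-\Delta_k)^sg\|_{L^{p_2}}$ on the right-hand side. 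The boundedness $L^{p_1}(d\mu_k)\times L^{p_2}(d\mu_k)\to L^p(d\mu_k)$ of these paraproducts is where the bilinear Dunkl--Calder\'on--Zygmund theory and the almost-orthogonality estimates enter: one verifies the requisite size and regularity conditions for the associated bilinear kernels using standard pointwise bounds for the Dunkl kernel together with the pointwise decay of $(-\Delta_k)^sf$, and then applies the corresponding multilinear $T(1)$/interpolation argument.

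I expect the main obstacle to be the high--high term $\Pi_3$, where $|\xi|\sim|\eta|$ and $m$ is no longer smooth across $\xi+\eta=0$. The standard remedy is to exploit the fractional smoothing: one distributes the $2s$ derivatives onto the two Littlewood--Paley pieces and estimates $\sum_j\big\|(-\Delta_k)^s\big(P_jf\,P_jg\big)\big\|_{L^p(d\mu_k)}$, where the sum over scales converges precisely because $s>0$ and the almost-orthogonality estimates supply a summable gain. A second delicate point is that the Dunkl translation is not a global contraction, so the bilinear multiplier above is not even well defined for arbitrary $f,g$; it is exactly the $\mathbb{Z}_2^d$-invariance of one factor that confines the convolution to the regime where the translation is $L^p$-bounded, which is why the hypothesis cannot be dropped by this method. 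Once $\Pi_1,\Pi_2,\Pi_3$ are each controlled, summing the three contributions yields the claimed fractional Leibniz rule.
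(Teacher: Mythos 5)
Your proposal follows, in essence, the route of Wr\'obel's original paper rather than anything in the present paper --- and that route is precisely the one this paper identifies as broken. Note first that the paper does not prove Theorem \ref{wrobels thm} at all: it is quoted from \cite{WrobelABMVAFC}, and immediately after stating it the authors point out that \cite[Lemma 4.4]{WrobelABMVAFC} appears to contain an error (confirmed by Wr\'obel via personal communication), so that it is not even clear the theorem holds as stated. What the paper does instead is prove the stronger Theorem \ref{frac leib rule main thm}, which contains this statement as the special case $\widetilde p_1=p_1$, $\widetilde p_2=p_2$ and requires no $\mathbb{Z}_2^d$-invariance; its proof is arranged so that $(-\Delta_k)^s$ only ever acts on a \emph{single} function, via the paraproduct decomposition of Proposition \ref{decompo of fg in paraproduct}, the transfer $(-\Delta_k)^s\Theta^{(1)}_j*_k(\cdots)=\widetilde\Theta^{(1)}_j*_k\big(2^{2js}\cdots\big)$, and the boundedness results of Section \ref{sec Dunkl paraproduct}.

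The concrete gap in your argument is the step where you ``realize'' $(-\Delta_k)^s(fg)$ as a bilinear multiplier with the Kato--Ponce symbol $m(\xi,\eta)=|\xi+\eta|^{2s}$. The identity $\mathcal{F}_k(fg)=\mathcal{F}_kf*_k\mathcal{F}_kg$ is valid, but it does not produce that symbol: in the Dunkl setting $E_k(ix,\xi)E_k(ix,\eta)\neq E_k(ix,\xi+\eta)$, and the frequency-side Dunkl translation spreads mass over a set rather than shifting it to the single point $\xi+\eta$, so the distribution $\mathcal{F}_k\big(E_k(i\cdot,\xi)E_k(i\cdot,\eta)\big)(\zeta)$ is not $\delta_{\xi+\eta}(\zeta)$. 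Consequently, multiplying the convolution $\mathcal{F}_kf*_k\mathcal{F}_kg$ by $|\zeta|^{2s}$ is not the same operation as inserting $|\xi+\eta|^{2s}$ against $\mathcal{F}_kf(\xi)\,\mathcal{F}_kg(\eta)$. Whether $\mathbb{Z}_2^d$-invariance of one factor can repair this identification is exactly the content of Wr\'obel's Lemma 4.4 --- the step reported to be erroneous --- and your proposal asserts it without proof; the present paper's stated reason for abandoning this whole strategy is that no such repair is available. A smaller but genuine secondary problem: your iterated Leibniz identity $\Delta_k(fg)=(\Delta_kf)\,g+2\langle\nabla f,\nabla g\rangle+f\,\Delta g$ also fails in general, since after one application of $T_i$ the factors $T_if$ and $\partial_ig$ are no longer $G$-invariant, so the Dunkl Leibniz rule cannot be applied a second time. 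The tools you invoke afterwards --- bilinear Dunkl--Calder\'on--Zygmund theory, almost-orthogonality estimates, pointwise decay of $(-\Delta_k)^sf$ --- are indeed the paper's toolkit, but the paper deploys them on paraproducts $\Pi[\theta,\psi,\phi]$ constructed so that the fractional Laplacian never hits the product $fg$; without that reformulation, your high--low, low--high, and high--high analysis has no well-defined bilinear operator to act on.
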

\emph{However, we would like to point out that there appears to be an error in \cite[Lemma 4.4]{WrobelABMVAFC}. Consequently, it is not clear whether Theorem \ref{wrobels thm} holds as stated. This issue has already been discussed with the author of \cite{WrobelABMVAFC} by the second named author of the present paper, and was confirmed via personal communication \cite{Wrobel2}.}
\par In this article, we aim to address this issue by establishing a fractional Leibniz rule for the Dunkl Laplacian. In fact, we prove more general results by considering an arbitrary reflection group $G$ and without assuming that either of the functions is $G$-invariant. We now present our main results concerning fractional Leibniz-type rules.

\begin{thm}\label{frac leib rule main thm}
Let $1<p, p_1,p_2, \widetilde p_1,\widetilde p_2<\infty$  satisfy the relation $1/p=1/p_1+1/p_2=1/\widetilde p_1 + 1/\widetilde p_2$. Then for any $s>0$ and $f,\,g\in \mathcal{S}(\mathbb{R}^d )$, we have
 \begin{eqnarray*}
||(-\Delta_k)^s(fg)||_{L^{p}(d\mu_k)} &\lesssim & ||(-\Delta_k)^s(f)||_{L^{p_1}(d\mu_k)}\, ||g||_{L^{p_2}(d\mu_k)}\\
&&+\  ||f||_{L^{\widetilde p_1}(d\mu_k)}\, ||(-\Delta_k)^s(g)||_{L^{\widetilde p_2}(d\mu_k)}.
\end{eqnarray*}
\end{thm}

\begin{rem}
Theorem \ref{frac leib rule main thm} recovers the fractional Leibniz rule for the classical Laplacian as a special case (see, e.g., \cite[Theorem 7.6.1]{GrafakosModernBook} and \cite[Theorem 1]{GrafakosTKPI}).
\end{rem}
\par In the classical case, using the definition \eqref{defn of usual frac powers of laplacian} and the translation invariance of the Lebesgue measure, one can easily verify that applying $(-\Delta)^s$ to the product of two functions yields a bilinear Fourier multiplier operator with symbol $|\xi + \eta|^{2s}$. The main difficulty in extending this approach to the Dunkl setting lies in the lack of translation invariance of the associated measure, which prevents a direct application of Fourier multiplier techniques. To overcome this, Wr\'{o}bel \cite{WrobelABMVAFC} proposed an alternative method by restricting to the case where one of the functions in the product is invariant under the action of the reflection group. However, as discussed earlier, this approach is also unsuccessful due to technical issues. This observation suggests the necessity of abandoning such methods in favor of an approach where $(-\Delta_k)^s$ acts on a single function, rather than on the product of two functions.
\par One such approach is available in the classical setting due to the work of Muscalu-Pipher-Tao-Thiele \cite{MuscaluBPP}, where they employ a `paraproduct decomposition'. This method, which involves convolutions of the product of two functions with suitable kernels, allows the fractional Laplacian $(-\Delta)^s$ to act only on one of the functions. So, this method appears to be more suitable in the Dunkl setting. In this setup, however, no analogous analysis has been carried out. To adapt their approach in this context, we introduce the following definition.
\begin{defn}\label{defn of paraproducts}
  Let  $\psi, \phi$ and $\theta \in C_c^{\infty}(\mathbb{R}^d )$ be such that $\psi=\mathcal{F}_k\Psi, \phi=\mathcal{F}_k\Phi,\theta=\mathcal{F}_k \Theta$. For any $j\in\mathbb{Z},$ define $\Psi_j(\xi)=2^{jd_k}\Psi(2^j\xi), \Phi_j(\xi)=2^{jd_k}\Phi(2^j\xi)$ and $\Theta_j(\xi)=2^{jd_k}\Theta(2^j\xi)$. Then for any $f, g\in \mathcal{S}(\mathbb{R}^d )$, the \emph{Dunkl-paraproduct operator} $\Pi[\theta, \psi, \phi]$ is given by
  \begin{equation*}
 \Pi[\theta, \psi, \phi](f, g)(x)=\sum\limits_{j\in \mathbb{Z}} \Theta_j *_k\big((\Psi_j*_kf)(\cdot)(\Phi_j*_kg)(\cdot)\big)(x),  
  \end{equation*}
  where $d_k$ is the homogeneous dimension of the measure $\mu_k$, and $*_k$ is the Dunkl convolution operator (see Section \ref{sec prel}). 
\end{defn}
\par Our approach is to establish the boundedness of these paraproduct operators as bilinear operators from a product of Lebesgue spaces to a Lebesgue space, and subsequently use this to prove fractional Leibniz rules. Specifically, we first prove the following result.
 \begin{thm}\label{MAIN bddnees of 3paraproducts}
For any $\psi, \phi, \theta \in C_c^{\infty}(\mathbb{R}^d)$, if the supports of at least two among $\psi$, $\phi$, and $\theta$ do not contain $0$, then the Dunkl paraproduct operator $\Pi[\theta, \psi, \phi]$ is bounded from $L^{p_1}(\mathbb{R}^d,$ $d\mu_k)\times L^{p_2}(\mathbb{R}^d,\,d\mu_k)$ to $L^{p}(\mathbb{R}^d,\, d\mu_k)$ for all $p,p_1,p_2$ satisfying $1<p_1,p_2<\infty$ with $1/p=1/p_1+1/p_2$.  \end{thm}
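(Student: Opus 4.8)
The plan is to realize the paraproduct as a bilinear Dunkl--Calder\'on--Zygmund operator and to establish a single base estimate by Littlewood--Paley methods, then extend to the full range by the bilinear theory. First I would dualize: fixing $h\in L^{p'}(d\mu_k)$ and using the adjoint property of Dunkl convolution to move $\Theta_j$ onto $h$ through its reflected kernel $\widetilde{\Theta}_j$ (which inherits the support, cancellation, and decay of $\Theta_j$), one arrives at the trilinear form
\[
\Lambda(f,g,h)=\sum_{j\in\mathbb{Z}}\int_{\mathbb{R}^d}(\Psi_j*_kf)\,(\Phi_j*_kg)\,(\widetilde{\Theta}_j*_kh)\,d\mu_k.
\]
Because $\mathcal{F}_k(\Psi_j*_kf)(\xi)=\psi(2^{-j}\xi)\,\mathcal{F}_kf(\xi)$, the hypothesis $0\notin\operatorname{supp}\psi$ says precisely that $\Psi_j*_kf$ is frequency-localized to the annulus $|\xi|\sim 2^j$ and hence carries cancellation, whereas a factor whose multiplier contains $0$ acts as a low-pass average. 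Thus at least two of the three factors in $\Lambda$ are genuine Littlewood--Paley pieces, and up to the symmetry $(\psi,f)\leftrightarrow(\phi,g)$ it suffices to treat the configuration in which $\psi,\phi$ cancel and the one in which $\theta,\psi$ cancel.

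For the base estimate $\Pi:L^2\times L^2\to L^1$ I would take $h\in L^\infty$ in $\Lambda$. Since all three factors sit at the common scale $j$, pulling out $\sup_j|\widetilde{\Theta}_j*_kh|$ and applying Cauchy--Schwarz in $j$ to the remaining sum bounds $|\Lambda(f,g,h)|$ by an integral of the product of the two square functions $S_\psi f=(\sum_j|\Psi_j*_kf|^2)^{1/2}$ and $S_\phi g=(\sum_j|\Phi_j*_kg|^2)^{1/2}$ against $\sup_j|\widetilde{\Theta}_j*_kh|$. Dominating the last factor pointwise by the Dunkl Hardy--Littlewood maximal function, $\sup_j|\widetilde{\Theta}_j*_kh|\lesssim M_kh$, and applying H\"older with exponents $(2,2,\infty)$ yields $|\Lambda|\lesssim\|S_\psi f\|_2\,\|S_\phi g\|_2\,\|h\|_\infty$; the Dunkl Littlewood--Paley estimate $\|S_\psi f\|_2\lesssim\|f\|_2$ then closes the bound. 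The same scheme, now with H\"older exponents $(p_1,p_2,p')$ and the $L^q(d\mu_k)$ boundedness of $M_k$ and of $S_\psi$ for $1<q<\infty$, already gives $\Pi:L^{p_1}\times L^{p_2}\to L^p$ for every $p>1$; in the configuration where $\theta,\psi$ cancel one instead pairs the two square functions $S_\psi f$ and $(\sum_j|\widetilde{\Theta}_j*_kh|^2)^{1/2}$ and dominates $\Phi_j*_kg$ by $M_kg$.

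To reach the remaining range $1/2<p\le 1$ permitted by the statement, where duality is unavailable, I would verify that the trilinear kernel of $\Pi$ extracted from $\Lambda$ satisfies the size and regularity bounds of a bilinear Dunkl--Calder\'on--Zygmund kernel, and then invoke the bilinear Dunkl--Calder\'on--Zygmund theory to upgrade the single base estimate to the full range $1<p_1,p_2<\infty$. The required kernel bounds should follow by summing over $j$ the rapid decay of $\Theta_j,\Psi_j,\Phi_j$ together with the almost-orthogonality estimates established earlier, which furnish the decay in the scale differences needed for the sum to converge with Calder\'on--Zygmund-type control on the space of homogeneous type $(\mathbb{R}^d,\|\cdot\|,d\mu_k)$.

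The main obstacle is analytic rather than combinatorial and lies entirely in the Dunkl-specific inputs. Because the Dunkl translation is not an ordinary shift and its kernel is not positive for the non-radial profiles $\Psi,\Phi,\Theta=\mathcal{F}_k^{-1}\psi,\mathcal{F}_k^{-1}\phi,\mathcal{F}_k^{-1}\theta$, none of the square function bound $\|S_\psi f\|_q\lesssim\|f\|_q$, the pointwise maximal domination $\sup_j|\widetilde{\Theta}_j*_kh|\lesssim M_kh$, or the Calder\'on--Zygmund kernel estimates is automatic; I expect each to rest on the almost-orthogonality estimates and on pointwise Schwartz-type bounds for the Dunkl translations of the profiles. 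A final technical point is the convergence of $\sum_{j\in\mathbb{Z}}$ and its interchange with the integral: for $f,g\in\mathcal{S}(\mathbb{R}^d)$ this is controlled by the rapid decay of the Littlewood--Paley pieces as $j\to\pm\infty$, so that all manipulations are first performed on partial sums and then passed to the limit using the uniform bounds above.
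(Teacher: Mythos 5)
Your proposal follows essentially the same route as the paper: the paper first proves the Banach-range bound (its Proposition~\ref{bddness of general 3paraproducts}) by exactly your dualization — pairing with $h$, moving $\Theta_j$ onto $h$ via Plancherel, Cauchy--Schwarz in $j$, H\"older, and Dunkl Littlewood--Paley square-function/maximal estimates — and then extends to the full range $1/2<p<\infty$ by verifying that the kernel $K(x,y_1,y_2)=\sum_j\int\tau^k_{-u}\Theta_j(x)\,\tau^k_u\Psi_j(-y_1)\,\tau^k_u\Phi_j(-y_2)\,d\mu_k(u)$ satisfies the bilinear Dunkl--Calder\'on--Zygmund conditions, using precisely the two inputs you identify (pointwise decay of Dunkl translations of Schwartz profiles, Proposition~\ref{smooth function all decay}, and the almost-orthogonality estimate, Proposition~\ref{almost orhto esti thm}), before invoking Theorem~\ref{bilinear calderon Zygmund bddness thm}. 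The only cosmetic difference is that the paper proves the whole Banach range at once rather than a single $L^2\times L^2\to L^1$ base estimate, and it dominates the low-pass factor via the Dunkl maximal theorem of \cite{SumanWBMTIDSVSI} rather than a claimed pointwise bound by $M_k$, a caveat you yourself flag.
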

  Definition~\ref{defn of paraproducts} involves the Dunkl convolution, which is not well studied, and relatively little is known about its properties. Consequently, proving such boundedness results requires substantial effort. We achieve this by employing the recently developed theory of bilinear Dunkl-Calderón-Zygmund operators \cite{SumanWBMTIDSVSI}, along with some estimates for the Dunkl translations of Schwartz functions (Proposition \ref{smooth function all decay}) and almost orthogonality type estimates (Proposition \ref{almost orhto esti thm}), which we establish in this work.
\begin{rem}
The proofs of Theorem \ref{frac leib rule main thm} relies on the boundedness of paraproduct operators, as established in Theorem \ref{MAIN bddnees of 3paraproducts}. However, there is a key difference: while the boundedness of paraproduct operators holds for the full range of~$p$, the fractional Leibniz rules require the restriction $p > 1$. This limitation arises because the proof involves an operator $\widetilde\Pi[\theta, \psi, \phi]$, which differs slightly from the standard paraproducts as in Definition \ref{defn of paraproducts}. In particular, one of the functions among $\theta$, $\phi$, and $\psi$ is not smooth here. For such an operator, our method does not suffice to establish boundedness $L^{p_1}(\mathbb{R}^d,\, d\mu_k) \times L^{p_2}(\mathbb{R}^d,\, d\mu_k) \to L^{p}(\mathbb{R}^d,\, d\mu_k)$ when $p \leq 1$. It will be challenging and interesting to work out the case $p \leq 1$.
\end{rem}
 Once the boundedness of the paraproduct operators is established, two key steps remain. The first is to express the product of two functions in terms of paraproducts. This requires detailed information about the support properties of Dunkl translations, which we establish in Proposition \ref{decompo of fg in paraproduct}. The second step involves showing that, for a Schwartz function $f$, $(-\Delta_k)^s f$ exhibits a certain pointwise decay, ensuring that $(-\Delta_k)^s f \in L^p(\mathbb{R}^d,\, d\mu_k)$. This is proved using semigroup techniques in Proposition \ref{frac deri in Lp thm}.
\par We conclude this introduction with a brief outline of the paper and some remarks on notation. In Section \ref{sec prel}, we introduce the notation and preliminary results from Dunkl theory that will be used throughout the paper. Section \ref{sec main ingredients} is devoted to developing the key ingredients needed for proving the boundedness of the paraproduct operators. Specifically, we establish estimates for the Dunkl translations of Schwartz functions, prove almost orthogonality estimates, and present the relevant results from the theory of bilinear Dunkl-Calder\'on-Zygmund operators. In Section \ref{sec Dunkl paraproduct}, we prove Theorem \ref{MAIN bddnees of 3paraproducts}, which establishes the boundedness of the paraproducts. Finally, in Section \ref{sec frac Leib rule}, we first establish the paraproduct decomposition and then prove the fractional Leibniz rules, that is, Theorems \ref{frac leib rule main thm}.
\par The notations introduced in Definition \ref{defn of paraproducts} will be used throughout the paper. In particular, for a suitable function $\psi$, we denote by $\Psi$ the function such that $\mathcal{F}_k \Psi = \psi$. We also use the scaled versions $\psi_j(\xi) = \psi(\xi / 2^j)$ and $\Psi_j(\xi) = 2^{j d_k} \Psi(2^j \xi)$. Similar notation is used for the functions $\phi$ and $\theta$.

\section{Preliminaries of Dunkl theory}\label{sec prel}
Let $x, y\in \mathbb{R}^d$, $\langle x, y \rangle$ be the Euclidean inner product and $|x|:= \sqrt{\langle x,x \rangle} $. For $0\neq \lambda \in \mathbb{R}^d$, define the function $\sigma_\lambda$ given by
$$\sigma_{\lambda}(x)=x-2\frac{\langle x,\lambda \rangle}{|\lambda|^2}\lambda.$$
Then $\sigma_\lambda$ is known as the \emph{reflection}
with respect to the hyperplane $\lambda^{\bot}:=\{x\in \mathbb{R}^d : \langle x, \lambda \rangle=0 \}$. Consider a finite subset $R$ of $\mathbb{R}^d$ such that $0\notin R$. Then $R$ is called a \emph{root system} if for any $\lambda \in R$, we have $R \cap \mathbb{R}\lambda = \{ \pm \lambda \} $ and $\sigma _{\lambda} (R) = R $. Here we fix a normalized root system $R$ such that  $|\lambda|=\sqrt{2},\ \forall \lambda \in R$. The set of reflections $\{ \sigma_ \lambda : \lambda \in R\}$ generate a finite subgroup $G$ of $O(d, \mathbb{R})$, which is known as the \emph{reflection group} (or  \emph{Coxeter group}) associated with the root system $R$. A function $k :R\rightarrow \mathbb{C}$ which is $G$-invariant, is known as a  \emph{multiplicity function}. Throughout the paper, we consider a fixed multiplicity function $k\geq 0$. Let $\gamma_k=\sum_{\lambda \in R}k(\lambda)$, $d_k=d+\gamma_k $, $h_k$ be the $G$-invariant homogeneous function given by $h_k(x) = \prod_ {\lambda \in R} | \langle x , \lambda \rangle|^{k (\lambda)}$ and $d\mu_k(x)$ be the normalized measure $c_k h_k(x)dx$, where 
$$c_k^{-1}=\int_{\mathbb{R}^d}e^{-{|x|^2}/{2}}\,h_k(x)\,dx.$$
\par Let $B(x,r)$ be a ball in $\mathbb{R}^d$, centred at $x$ and of radius $r$. Then, with respect to this measure, the volume $\mu_k(B(x,r))$ of $B(x,r)$ is given by 
\begin{equation}\label{Volofball}
    \mu_k(B(x,r))\sim \ r^d\prod\limits_{\lambda \in R}\left(|\langle x,\lambda \rangle|+r\right)^{k(\lambda)}.
\end{equation}
If $r_2>r_1>0$, then it is easy to see from above that
\begin{eqnarray}\label{VOLRADREL}
   C\left(\frac{r_1}{r_2}\right)^{d_k}\leq  \frac{\mu_k(B(x,r_1))}{\mu_k(B(x,r_2))}\leq C^{-1} \left(\frac{r_1}{r_2}\right)^{d}.
\end{eqnarray}
\subsection{Orbit distances and orbit of balls}
Let us define $d_G(x,y)$ to be the distance between the $G$-orbits of $x$ and $y$, that is $d_G(x,y)=\min\limits_{\sigma \in G}|\sigma(x)-y|$, and for any $r>0$ we write $V_G(x,y,r)=\max\,\left\{\mu_k(B(x,r)),\mu_k(B(y,r))\right\}$. Then an immediate consequence of (\ref{Volofball}) is that
\begin{equation}\label{V(x,y,d(x,y) comparison}
V_G(x,y,d_G(x,y))\sim \mu_k(B(x,d_G(x,y))\sim \mu_k(B(y,d_G(x,y)).
\end{equation}
Let $\mathcal{O}(B)$ denotes the orbit of the ball $B$, that is 
$$\mathcal{O}(B)=\Big\{y\in \mathbb{R}^d: d_G(c_B,y)\leq r(B)\Big\}=\bigcup\limits_{\sigma \in G}\sigma (B),$$ 
where $c_B$ denotes the centre and $r(B)$ denotes the radius of the ball $B$. Then we have the relation
\begin{equation}\label{orbit volume compa}
    \mu_k(B)\leq \mu_k(\mathcal{O}(B))\leq |G|\,\mu_k(B).
\end{equation}
 $d_G$ is known as \emph{`the Dunkl metric'}, though it is \emph{not} a metric on $\mathbb{R}^d$. It satisfies the triangle inequality and also for any $x,y\in \mathbb{R}^d$, we have $d_G(x,y)\leq |x-y|.$

\subsection{Dunkl operators} 
The \emph{differential-difference operators} or the \emph{Dunkl operators} $T_{\xi}$ introduced by  C.F. Dunkl \cite{Dunkl} is given by
\begin{eqnarray*}
T_{\xi} f(x)= \partial_{\xi} f(x)+\sum\limits_{\lambda \in R} \frac{k(\lambda )}{2} \langle\lambda, \xi \rangle \frac{f(x) - f(\sigma _\lambda  x)}{\langle \lambda  , x \rangle}.
\end{eqnarray*}The Dunkl operators $T_{\xi}$ are the $k$-deformations of the directional derivative operator $\partial_{\xi}$ and coincides with them in the case $k=0$. In contrast to the classical directional derivatives, Dunkl operators do not satisfy usual Leibniz rule. However, if at least one of $f$ or $g$ is $G$-invariant then the Leibniz-type rule holds
 $$T_\xi(fg)(x)=T_\xi f(x)\, g(x)+f(x)T_\xi g(x).$$
\par Let $\{e_j : j = 1, 2, \cdots, d\}$ be the standard basis of $\mathbb{R}^d$ and set $T_j = T_{e_j}$ and $\partial_{j}=\partial_{e_j}$. We use $\Delta_k$ to denote the \emph{Dunkl Laplacian} on $\mathbb{R}^d$, that is 
$$ \Delta_k=\sum\limits^d_{j=1}T^2_{j}=\Delta f(x)
+ \sum_{\lambda\in R} k(\lambda)
\left(
\frac{\langle\nabla f(x),\lambda\rangle}{\langle\lambda,x\rangle}
-\frac{|\lambda |^2}{2}
\frac{f(x)-f(\sigma_\lambda x)}{\langle\lambda,x\rangle^2}
\right).$$
\subsection{Dunkl kernel and Dunkl transform}
 For a fixed $y\in \mathbb{R}^d$, it is known that there is a unique real analytic solution for the system
$T_{\xi} f =  \langle y,\xi \rangle  f $  satisfying $f(0)=1$. The solution $f(x)=E_k(x,y)$ is called the \emph{Dunkl kernel}. The Dunkl kernel $E_k(x,y)$ which is generalization of  the exponential function $e^{<x,y>}$, has a unique
extension to a holomorphic function on $\mathbb{C}^d\times \mathbb{C}^d$. We list below few properties of the Dunkl kernel (see \cite{RoslerDOTA, DunklIKWRGI} for details).\\
$\bullet$ $E_k(x,y)=E_k(y,x)$ for any $x,\ y\in \mathbb{C}^d,$\\
$\bullet$ $E_k(tx,y)=E_k(x,ty)$ for any $x,\ y\in \mathbb{C}^d$ and $t\in \mathbb{C},$\\
$\bullet$ $\overline{E_k(x,y)}=E_k(\overline{x},\overline{y})$ for any $x,\ y\in \mathbb{C}^d$,\\
$\bullet$ $|E_k(ix,y)|\leq 1$ for any $x,\ y\in \mathbb{R}^d$.
\par For $0<p< \infty$, let $L^p(\mathbb{R}^d,d\mu_k)$ denote the space of complex valued measurable functions $f$ such that 
$$||f||_{L^p(d\mu_k)}:=\Big(\int_{\mathbb{R}^d}|f(x)|^p\,d\mu_k(x)\Big)^{1/p}<\infty$$
and  $L^{p,\,\infty}(\mathbb{R}^d,d\mu_k)$ be the corresponding weak space with norm
$$||f||_{L^{p,\,\infty}(d\mu_k)}:=\sup\limits_{t>0}\,t\,[\,\mu_k\big(\{x\in \mathbb{R}^d:|f(x)|>t\}\big)]^{1/p}<\infty.$$
Also, let us use the notation $L^\infty(\mathbb{R}^d,d\mu_k)$ to denote the usual $L^\infty(\mathbb{R}^d)$ space with norm $\|\cdot\|_{L^\infty}$.
 For any $f$ $\in L^1(\mathbb{R}^d,d\mu_k)$ the \emph{Dunkl transform} of $f$ is defined by
\begin{eqnarray*}
\mathcal{F}_kf(\xi)=\int_{\mathbb{R}^d}f(x)E_k(-i\xi,x)\,d\mu_k(x).
\end{eqnarray*}
The following  are some well known properties of Dunkl transform \cite{DunklHTATFRG, deJeuTDT}.\\
$\bullet$ $\mathcal{F}_k$ is a homeomorphism of the space $\mathcal{S}(\mathbb{R}^d )$,\\
$\bullet$ If the function $f$ is radial then so is  $\mathcal{F}_kf$,\\
$\bullet$ $\mathcal{F}_k$ extends to an isometry on  $L^2(\mathbb{R}^d,d\mu_k)$ (Plancherel's formula), that is,
$$||\mathcal{F}_kf||_{L^2(d\mu_k)}=||f||_{L^2(d\mu_k)},$$
$\bullet$ If both $f$ and $\mathcal{F}_kf$ are in $L^1(\mathbb{R}^d,d\mu_k)$, then the following Dunkl inversion formula holds
$$f(x)=\mathcal{F}_k^{-1}(\mathcal{F}_kf)(x):=\int_{\mathbb{R}^d}\mathcal{F}_kf(\xi)E_k(i\xi,x)\,d\mu_k(\xi),$$
$\bullet$ From definition of the Dunkl kernel, for any $f\in \mathcal{S}(\mathbb{R}^d )$, the following relations holds :
$$T_j\mathcal{F}_kf(\xi)=-\mathcal{F}_k(i(\cdot)_jf)(\xi) \text{ and } \mathcal{F}_k(T_jf)(\xi)=i\xi_j \mathcal{F}_kf(\xi).$$
\subsection{Dunkl translations Dunkl convolution}\label{sec dunkl trans and conv}
 The \emph{Dunkl translation} $\tau^k_xf$ of a function $f\in  L^2(\mathbb{R}^d,d\mu_k)$ is defined  in terms Dunkl transform by 
$$\mathcal{F}_k(\tau^k_xf)(y)=E_k(ix,y)\mathcal{F}_kf(y).$$
Since $E_k(ix,y)$ is bounded, the above formula defines $\tau^k_x$ as a bounded operator on $L^2(\mathbb{R}^d,d\mu_k).$
We list below few properties of the Dunkl translations (see \cite{ThangaveluCOMF} for details).\\
$\bullet$ For $f\in \mathcal{S}(\mathbb{R}^d )$, $\tau^k_x$ can be pointwise defined as 
$$\tau^k_xf(y)=\int_{\mathbb{R}^d}E_k(ix,\xi)E_k(iy,\xi)\mathcal{F}_kf(\xi)\,d\mu_k(\xi),$$
$\bullet$ $\tau_y^kf(x)=\tau_{x}^kf(y)$ for any $f$ in $\mathcal{S}(\mathbb{R}^d )$,\\
$\bullet$ $\tau^k_x(f_t)=(\tau^k_{t^{-1}x}f)_t,\ \forall x\in\mathbb{R}^d$ and $\forall f\in\mathcal{S}(\mathbb{R}^d)$, where $f_t(x)=t^{-d_k}f(t^{-1}x)$ and $\ t>0$.\\
$\bullet$ If $f$ is a reasonable radial function such that $f \geq 0$, then $\tau^k_x f \geq 0$.\\
$\bullet$ Although $\tau^k_x$ is bounded operator for radial functions in $L^p(\mathbb{R}^d,d\mu_k)$ (see \cite{GorbachevPLBDTGTOAIA}), it is not known whether Dunkl translation is bounded operator or not on whole $L^p(\mathbb{R}^d,d\mu_k)$ for $p\neq2.$

\par For $f,g \in L^2(\mathbb{R}^d, d\mu_k)$, the \emph{Dunkl convolution} $f*_kg$
of $f$ and $g$ is defined by 
$$f*_kg(x)=\int_{\mathbb{R}^d}f(y)\tau^k_xg(-y)\,d\mu_k(y).$$
$*_k$ has the following analogous properties (see \cite{ThangaveluCOMF} for details) to that of usual convolution.\\
$\bullet$ $f*_kg(x)=g*_kf(x)$ for any $f, g \in  L^2(\mathbb{R}^d, d\mu_k)$;\\
$\bullet$ $\mathcal{F}_k(f*_kg)(\xi)=\mathcal{F}_kf(\xi) \mathcal{F}_kg(\xi)$ for any $f, g \in  L^2(\mathbb{R}^d, d\mu_k)$.
\subsection{Dunkl heat kernel}
The family $\{e^{t \Delta_k}\}_{t \geq 0}$, referred to as the \emph{Dunkl heat semigroup}, is defined through the relation  
$$e^{t \Delta_k} f(x) = \mathcal{F}_k^{-1} \left( e^{-t|\cdot|^2} \mathcal{F}_k f \right)(x).$$  
This expression represents the solution to the \emph{Dunkl heat equation}, which is formulated as  
\begin{align*}
    \begin{cases}
       \partial_t u(x,t) - \Delta_k u(x,t) = 0, & \quad (x,t) \in \mathbb{R}^d \times (0,\infty),\\
       u(x,0) = f(x). &
    \end{cases}
\end{align*}  
By transforming the equation through the Dunkl transform, one can express the semigroup action as 
$$e^{t \Delta_k} f(x) = f \ast_k h_t(x) = \int_{\mathbb{R}^d} h_t(x,y)\, f(y)\, d\mu_k(y),$$  
where the function $h_t(x,y):=\tau^k_x h_t(-y)$, called the \emph{Dunkl heat kernel}, has the explicit form  
$$h_t(x) = \mathcal{F}_k^{-1}(e^{-t|\cdot|^2})(x) = (2t)^{-d_k/2} e^{-{|x|^2}/{(4t)}}.$$  
Clearly, $\tau^k_xh_t$ is a non-negative function belonging to the Schwartz class. We also record the following two properties of the Dunkl heat kernel for later use
\begin{equation}\label{integration heta kerenl is 1}
 \int_{\mathbb{R}^d}  h_t(x, y)\, d\mu_k(y)=1, \text{ for any } x\in \mathbb{R}^d
\end{equation}
 and 
\begin{eqnarray}\label{decay esti of heta kerenl with mod}
    h_t(x, y) &\lesssim & \left(1+\frac{|x-y|}{\sqrt{t}}\right)^{-2} \frac{1}{\mu_k(B(x, \sqrt{t}))}\, \exp\Big(-\frac{d_G(x,y)^2}{ct}\Big) \\
    &\lesssim & t^{-d_k/2}\, \exp\Big(-\frac{d_G(x,y)^2}{ct}\Big).\nonumber
\end{eqnarray}
The preceding results have been stated directly from \cite[Theorem 3.1]{HejnaROADFTHSHITRDS} and \cite[Lemma 4.13]{RoslerDOTA}.

\section{Ingredients for the proof of the Theorem \ref{MAIN bddnees of 3paraproducts} }\label{sec main ingredients}
This section is devoted to assembling the essential components, such as estimates for Dunkl translations of Schwartz functions and the bilinear Dunkl–Calder\'on–Zygmund theory, which will be employed later to establish the boundedness of Dunkl paraproduct operators.
\subsection{Estimates for the Dunkl translations of Schwartz functions}\label{sec esti of trans of sch}
In this subsection we prove some decay estimates for Dunkl translations of functions in $\mathcal{S}(\mathbb{R}^d ).$
\begin{prop}\label{smooth function all decay}
Let $\phi$ be a smooth function on $\mathbb{R}^d$ such that ${\rm supp}\, \phi\subset B(0,r)$ for some $r>0$ and $\Phi\in \mathcal{S}(\mathbb{R}^d)$ be such that $\mathcal{F}_k\Phi=\phi$. Let $L>3d_k$ and $x, y, y'\in \mathbb{R}^d$ be such that $|y-y'|\leq 1$. Then the following hold:
   \begin{enumerate}[label=(\roman*)]
   \item \label{size decay}
$|\tau^k_x \Phi(-y)|\leq C_{r,L}\, \mu_k(B(x,1))^{-1}(1+d_G(x,y))^{-3L}(1+|x-y|)^{-1},$
\item \label{difference decay}
$|\tau^k_x \Phi(-y)- \tau^k_x \Phi(-y')|\leq C_{r,L}\, |y-y'|\,\mu_k(B(x,1))^{-1}(1+d_G(x,y))^{-3L}(1+|x-y|)^{-1}.$
   \end{enumerate}
\end{prop}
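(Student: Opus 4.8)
The plan is to read the statement as a pointwise kernel bound for the Dunkl convolution operator $f\mapsto f*_k\Phi$, whose kernel is precisely $K(x,y)=\tau^k_x\Phi(-y)$. Starting from the pointwise formula $\tau^k_x\Phi(-y)=\int_{B(0,r)}\phi(\xi)\,E_k(ix,\xi)E_k(-iy,\xi)\,d\mu_k(\xi)$, the first observation is that $\mathcal{F}_k(\tau^k_x\Phi)=E_k(ix,\cdot)\phi$ is again supported in $B(0,r)$, so $\Phi$ and all of its translates are band-limited and live at the unit scale. This is exactly why the normalising factor in \ref{size decay} is $\mu_k(B(x,1))^{-1}$: by \eqref{Volofball} it is comparable to the on-diagonal Dunkl heat kernel at unit time, $h_1(x,x)$. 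The whole estimate should therefore be produced by comparison with the heat kernel at time $t\sim 1$, for which the sharp Gaussian bound \eqref{decay esti of heta kerenl with mod} is at our disposal, together with the volume estimates \eqref{Volofball}--\eqref{VOLRADREL} and the orbit comparison \eqref{V(x,y,d(x,y) comparison}.

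For the size estimate \ref{size decay} the concrete strategy is to reduce everything to the \emph{radial} heat kernel, whose translate $\tau^k_x h_t(-z)=h_t(x,z)$ is the one Dunkl translation with a usable closed bound. Factoring $\phi(\xi)=e^{-t|\xi|^2}a_t(\xi)$ with $a_t=\phi\,e^{t|\cdot|^2}\in C_c^\infty(B(0,r))$ writes $\Phi=A_t*_k h_t$ as a Dunkl convolution of a Schwartz function against the heat kernel; I would then insert the Gaussian bound \eqref{decay esti of heta kerenl with mod} and the rapid decay of $A_t$. Fixing $t\sim 1$, the volume comparison \eqref{VOLRADREL} turns the heat normalisation $t^{-d_k/2}$ into $\mu_k(B(x,1))^{-1}$; the factor $\left(1+|x-y|/\sqrt t\right)^{-2}$ in \eqref{decay esti of heta kerenl with mod} supplies the Euclidean gain $(1+|x-y|)^{-1}$; and the Gaussian $\exp\!\big(-d_G(x,y)^2/(ct)\big)$ dominates the polynomial $(1+d_G(x,y))^{-3L}$.

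The crux, and the step I expect to be genuinely delicate, is that the convolution $A_t*_k h_t$ still carries a Dunkl translation of the \emph{non-radial} factor $A_t$, and translations of non-radial functions admit neither a positive representing measure nor a Leibniz rule, so one cannot simply differentiate the product $E_k(ix,\xi)E_k(-iy,\xi)$ to generate decay. To get past this I would not move the translation blindly but run an $L^2$/Plancherel off-diagonal argument: using the self-adjointness of $\Delta_k$ and the frequency identities $\mathcal{F}_k(T_jf)(\xi)=i\xi_j\mathcal{F}_kf(\xi)$ and $T_j\mathcal{F}_kf=-\mathcal{F}_k(i(\cdot)_jf)$, I integrate by parts in the frequency variable, trading the smoothness of $\phi$ for arbitrary polynomial decay in $d_G(x,y)$, while a single ordinary-derivative integration by parts yields the one Euclidean power $(1+|x-y|)^{-1}$. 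The Gaussian heat-kernel bounds \eqref{decay esti of heta kerenl with mod} and the doubling property \eqref{VOLRADREL} are what make these $L^2$ estimates close with the correct $\mu_k(B(x,1))^{-1}$ normalisation, and \eqref{V(x,y,d(x,y) comparison} governs how $d_G$ enters the bookkeeping. The parameter $L>3d_k$ and the exponent $3L$ should appear only at the very end, to guarantee summability/integrability of these estimates when they are later fed into the paraproduct bounds.

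For the Lipschitz estimate \ref{difference decay} I would write $\tau^k_x\Phi(-y)-\tau^k_x\Phi(-y')=\int_0^1\big\langle y-y',\,\nabla_w\big(\tau^k_x\Phi\big)(-w_t)\big\rangle\,dt$ with $w_t=(1-t)y'+ty$, and differentiate under the integral sign in $w$. Since $\partial_{w_j}E_k(-iw,\xi)$ obeys, on $\operatorname{supp}\phi$, the same size and $\xi$-regularity bounds as $E_k(-iw,\xi)$ (being bounded by $C|\xi|\le Cr$ with analogous derivative control), the proof of \ref{size decay} applies verbatim to each component of $\nabla_w(\tau^k_x\Phi)(-w)$ and gives the same right-hand side. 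The hypothesis $|y-y'|\le 1$ is used precisely to stabilise the geometry: along the segment one has $d_G(x,w_t)\ge d_G(x,y)-1$ and $|x-w_t|\ge|x-y|-1$, so that $(1+d_G(x,w_t))^{-3L}\lesssim(1+d_G(x,y))^{-3L}$ and $(1+|x-w_t|)^{-1}\lesssim(1+|x-y|)^{-1}$ uniformly in $t$ (treating the ranges $d_G(x,y)\le 1$ and $d_G(x,y)\ge 1$ separately). Integrating in $t$ then extracts the factor $|y-y'|$ and completes the proof.
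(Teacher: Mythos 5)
Your argument for part \ref{difference decay} contains a genuine gap, and it sits exactly at the point where the Dunkl setting differs from the classical one. The segment/mean-value argument requires a pointwise bound on $\nabla_w\big(\tau^k_x\Phi\big)(-w)$ with the \emph{full decay} of part \ref{size decay}, and your justification — that $|\partial_{w_j}E_k(-iw,\xi)|\le |\xi|\le Cr$ on ${\rm supp}\,\phi$, so "the proof of \ref{size decay} applies verbatim" — is a non sequitur. Pointwise boundedness of the differentiated kernel only yields $|\nabla_w\tau^k_x\Phi(-w)|\lesssim 1$, with no spatial decay at all; the machinery that produces the decay in \ref{size decay} (the paper cites it from \cite{HejnaRODTONRK}, and it rests on heat-kernel factorization, positivity of translated radial kernels, and the Gaussian bound \eqref{decay esti of heta kerenl with mod}) applies to genuine Dunkl translations, and $\nabla_w\tau^k_x\Phi(-w)$ is \emph{not} one: ordinary partial derivatives do not commute with Dunkl translations. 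Only the Dunkl operators do, i.e. $T_{j,w}\big[\tau^k_x\Phi(-w)\big]=\tau^k_x\Phi'(-w)$ with $\mathcal{F}_k\Phi'=-i(\cdot)_j\phi$, to which \ref{size decay} would indeed apply verbatim. But $\partial_{w_j}=T_{j,w}-\sum_{\lambda\in R}\tfrac{k(\lambda)}{2}\langle\lambda,e_j\rangle\,\tfrac{1-\sigma_\lambda}{\langle\lambda,\cdot\rangle}$, so your gradient also contains the reflection-difference quotients $\big(\tau^k_x\Phi(-w)-\tau^k_x\Phi(-\sigma_\lambda w)\big)/\langle\lambda,w\rangle$. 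Bounding these is (a) circular — it is a Lipschitz estimate of precisely the type \ref{difference decay} is asserting, now between $w$ and $\sigma_\lambda w$, which can be far apart so the hypothesis $|y-y'|\le 1$ is unavailable — and (b) incompatible with the claimed Euclidean factor: $d_G(x,\sigma_\lambda w)=d_G(x,w)$, but $|x-\sigma_\lambda w|$ need not be comparable to $|x-w|$, so estimating the reflected term by \ref{size decay} produces $(1+|x-\sigma_\lambda w|)^{-1}$, not the required $(1+|x-w|)^{-1}$. Your geometric stabilization along the segment ($d_G(x,w_t)\ge d_G(x,y)-1$, $|x-w_t|\ge |x-y|-1$) is fine, but it cannot repair the missing gradient bound.

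The paper's proof of \ref{difference decay} avoids differentiating the translation altogether: it factors $\Phi=(\mathcal{F}_k^{-1}F)*_kh_1$ with $F=\phi\,e^{|\cdot|^2}$, so that $\tau^k_x\Phi(-y)=\int \tau^k_x(\mathcal{F}_k^{-1}F)(-z)\,h_1(z,y)\,d\mu_k(z)$, applies \ref{size decay} to $\mathcal{F}_k^{-1}F$, and places the difference entirely on the heat-kernel factor, where the Lipschitz estimate $|h_1(z,y)-h_1(z,y')|\lesssim |y-y'|\,\big(h_2(z,y)+h_2(z,y')\big)$ is a known, separately proven fact (\cite[Lemma 2.3]{HejnaRODTONRK}); the triangle inequalities for $d_G$ and $|\cdot|$ together with \eqref{decay esti of heta kerenl with mod} then close the estimate. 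In other words, in the Dunkl setting the Lipschitz regularity has to be borrowed from the heat kernel, not manufactured by differentiating $\tau^k_x\Phi$. (Your sketch of \ref{size decay} shares the same factorization idea — your $a_t=\phi\,e^{t|\cdot|^2}$ is the paper's $F$ at $t=1$ — but its resolution of the "crux" via frequency integration by parts is also not a proof: the absence of a Leibniz rule for $T_\xi$ acting on $E_k(ix,\xi)E_k(-iy,\xi)$ is precisely why that route does not generate decay; the paper simply cites \ref{size decay} from \cite[eq.\ (4.7)]{HejnaRODTONRK}.)
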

\begin{proof}
Proof of \ref{size decay} is already known (see \cite[eq. (4.7)]{HejnaRODTONRK}).\\
To prove \ref{difference decay}, we can directly use \cite[Theorem 4.7]{HejnaRODTONRK} together with the fact that
$$\mu_k\big(B(x,1)\big) \lesssim \mu_k\big(B(y,1)\big)\,(1 + d_G(x,y))^{d_k}.$$
However, we will give a short proof of \ref{difference decay}, using a similar method to that used in \cite{HejnaRODTONRK}. Let $h_t(x)$ be the Dunkl heat kernel.
Define $F(\xi)=\phi(\xi)\,e^{|\xi|^2}$. Then $F$ is smooth and ${\rm supp}\, F\subset B(0,r)$ and
$$\tau^k_x \Phi(-y)=\int_{\mathbb{R}^d}\tau^k_x(\mathcal{F}_k^{-1}F)(-z)\,h_1(x, y)\,d\mu_k(z).$$
Now using \ref{size decay} for the function $F$, \cite[Lemma 2.3]{HejnaRODTONRK}, the condition $|y-y'|\leq 1$, and then \cite[Theorem 3.1]{HejnaROADFTHSHITRDS}, we get
\begin{eqnarray*}
  &&(1+d_G(x,y))^{3L}(1+|x-y|)\,|\tau^k_x \Phi(-y)- \tau^k_x \Phi(-y')|\\
  &\leq& C_{r,L}\, \frac{(1+d_G(x,y))^{3L}(1+|x-y|)}{\mu_k(B(x,1))}\!\int_{\mathbb{R}^d}\! \frac{|h_1(x, y)-h_1(x, y')|}{(1+d_G(x,z))^{3L}(1+|x-z|)}\,d\mu_k(z)\\
  &\leq& C_{r,L}\, \frac{|y-y'|}{\mu_k(B(x,1))}\int_{\mathbb{R}^d}\frac{(1+d_G(x,z))^{3L}(1+|x-z|)(1+d_G(y,z))^{3L}(1+|y-z|)}{(1+d_G(x,z))^{3L}(1+|x-z|)}\\
  &&\times (h_2(z, y)+h_2(z, y'))\,d\mu_k(z)\\
  &\leq& C_{r,L}\, \frac{|y-y'|}{\mu_k(B(x,1))}\Big[\int_{\mathbb{R}^d}(1+d_G(y,z))^{3L}(1+|y-z|)\, h_2(z, y) \,d\mu_k(z)\\
  && +\, \int_{\mathbb{R}^d}(1+d_G(y',z))^{3L}(1+|y'-z|)\, h_2(z, y') \,d\mu_k(z)\Big]\\
  &\leq& C_{r,L}\, \frac{|y-y'|}{\mu_k(B(x,1))}\Big[\int_{\mathbb{R}^d}(1+d_G(y,z))^{3L}\, \frac{\exp\big({-c\,d_G(y,z)^2}\big)}{\mu_k(B(z,1))}\,d\mu_k(z)\\
  && + \int_{\mathbb{R}^d}(1+d_G(y',z))^{3L}\, \frac{\exp\big({-c\,d_G(y',z)^2}\big)}{\mu_k(B(z,1))}\,d\mu_k(z)\Big] \\
   &\leq& C_{r,L}\, \frac{|y-y'|}{\mu_k(B(x,1))},
\end{eqnarray*}
where in the last inequality, we have used the fact that
$$\int_{\mathbb{R}^d}\frac{\exp\big({-c\,d_G(y,z)^2}\big)}{\mu_k(B(z,1))}\,d\mu_k(z) \text{ and }\int_{\mathbb{R}^d}\frac{\exp\big({-c\,d_G(y',z)^2}\big)}{\mu_k(B(z,1))}\,d\mu_k(z)$$  are finite and independent of $y$ and $y'$ respectively.
\end{proof}

\subsection{Almost orthogonality type estimates}\label{sec almost ortho esti section}
In the next  proposition, we prove bilinear almost orthogonality type estimates involving the Dunkl metric $d_G$. The main idea of these estimates is based on the result in the classical case by Grafakos and Torres \cite[Proposition 3]{GrafakosDDFBOAADC}.
\begin{prop}\label{almost orhto esti thm}
Let $x, y_1, y_2\in \mathbb{R}^d$, $j\in \mathbb{Z}$ and $L>3d_k$. Then there exists $C>0$ such that
\begin{eqnarray*}
&&\int_{\mathbb{R}^d}\frac{d\mu_k(u)}{\big[(1+2^jd_G(x, u))(1+2^jd_G(y_1, u))(1+2^jd_G(y_2, u))\big]^{3L}}\\
&\leq& \frac{C\,\mu_k(B(x, 2^{-j})}{\big[(1+2^jd_G(x, y_1))(1+2^jd_G(x, y_2))\big]^{L}},
\end{eqnarray*}
where the constant $C$ is independent of $x,y_1,y_2$ and $j$.
\end{prop}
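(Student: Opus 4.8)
The plan is to reduce the trilinear integral to a single-weight integral by invoking the triangle inequality for the Dunkl metric $d_G$ twice, and then to control the resulting single-weight integral by $\mu_k(B(x,2^{-j}))$ via a dyadic decomposition into orbit annuli. Write $A=1+2^jd_G(x,u)$, $B_1=1+2^jd_G(y_1,u)$ and $B_2=1+2^jd_G(y_2,u)$. Since $d_G$ satisfies the triangle inequality, $d_G(x,y_i)\leq d_G(x,u)+d_G(u,y_i)$, and because $(1+a)(1+b)\geq 1+a+b$ for $a,b\geq 0$, this gives the submultiplicative bounds $1+2^jd_G(x,y_i)\leq AB_i$ for $i=1,2$.

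First I would use these to extract the two separation factors from the integrand. Raising each bound to the power $L$ yields $(AB_i)^{-L}\leq (1+2^jd_G(x,y_i))^{-L}$, so writing $(AB_1B_2)^{-3L}=(A^{-L}B_1^{-L})(A^{-L}B_2^{-L})\,A^{-L}B_1^{-2L}B_2^{-2L}$ and applying the two bounds produces
$$(AB_1B_2)^{-3L}\leq \frac{A^{-L}B_1^{-2L}B_2^{-2L}}{[(1+2^jd_G(x,y_1))(1+2^jd_G(x,y_2))]^{L}}.$$
Because $B_1,B_2\geq 1$, the factors $B_1^{-2L}B_2^{-2L}$ are harmless, so the whole problem reduces to the single-weight estimate $\int_{\mathbb{R}^d}(1+2^jd_G(x,u))^{-L}\,d\mu_k(u)\lesssim \mu_k(B(x,2^{-j}))$, with a constant independent of $x$ and $j$.

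For this last step I would decompose $\mathbb{R}^d$ into the orbit ball $E_0=\{u:d_G(x,u)\leq 2^{-j}\}$ and the annuli $E_m=\{u:2^{m-1}2^{-j}<d_G(x,u)\leq 2^m2^{-j}\}$ for $m\geq 1$. On $E_m$ the weight $A^{-L}$ is at most $\sim 2^{-Lm}$, while $E_m\subset \mathcal{O}(B(x,2^m2^{-j}))$, whose measure is at most $|G|\,\mu_k(B(x,2^m2^{-j}))$ by \eqref{orbit volume compa}; and by the volume-growth comparison \eqref{VOLRADREL} one has $\mu_k(B(x,2^m2^{-j}))\lesssim 2^{md_k}\mu_k(B(x,2^{-j}))$. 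Summing the contributions $2^{-Lm}2^{md_k}\mu_k(B(x,2^{-j}))$ over $m\geq 0$ converges precisely because $L>3d_k>d_k$, giving the claimed bound; combining it with the factorisation above yields the stated inequality.

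The argument is the Dunkl-metric analogue of the classical Grafakos–Torres estimate, and the only genuinely new point is this final integral, where the Euclidean polynomial volume growth is replaced by the behaviour encoded in \eqref{VOLRADREL} together with the passage from balls to orbit balls in \eqref{orbit volume compa}. I expect the main obstacle, though a mild one, to be the power bookkeeping in $L$, so that enough decay in $u$ survives after the two separation factors are extracted, together with verifying that every constant (in particular the one from the geometric series) depends only on $L$, $d_k$ and $|G|$, and not on $x$, $y_1$, $y_2$ or $j$.
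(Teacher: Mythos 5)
Your proposal is correct and follows essentially the same route as the paper: the same triangle-inequality bounds $(1+2^jd_G(x,u))(1+2^jd_G(y_i,u))\ge 1+2^jd_G(x,y_i)$ are used to peel off the two separation factors (your multiplicative factorisation is just the paper's inequality rewritten), reducing everything to $\int_{\mathbb{R}^d}(1+2^jd_G(x,u))^{-L}\,d\mu_k(u)\lesssim \mu_k(B(x,2^{-j}))$. The only difference is that the paper asserts this last estimate with a bare $\lesssim$, whereas you supply the standard justification via dyadic orbit annuli, \eqref{orbit volume compa} and \eqref{VOLRADREL}, which is a correct and welcome filling-in of that detail.
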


\begin{proof}
	Using the following simple inequalities
	$$
	(1+2^jd_G(x, u))(1+2^jd_G(y_1, u))\ge (1+2^jd_G(x, y_1))
	$$
	and
	$$
	(1+2^jd_G(y_2, u))(1+2^jd_G(x, u))\ge (1+2^jd_G(x, y_2)),
	$$
	we have 
	$$
	\begin{aligned}
		\big[(1+2^jd_G(x, u))&(1+2^jd_G(y_1, u))(1+2^jd_G(y_2, u))\big]^{3L}\\
		&\ge  (1+2^jd_G(x, u))^L (1+2^jd_G(x, y_1))^L(1+2^jd_G(x, y_2))^L.
	\end{aligned}
	$$
	It then follows that 
	$$
	\begin{aligned}
		\int_{\mathbb{R}^d}&\frac{d\mu_k(u)}{\big[(1+2^jd_G(x, u))(1+2^jd_G(y_1, u))(1+2^jd_G(y_2, u))\big]^{3L}}\\
		&\le \frac{1}{(1+2^jd_G(x, y_1))^L(1+2^jd_G(x, y_2))^L}\int_{\mathbb R^d}\frac{d\mu_k(u)}{(1+2^jd_G(x, u))^L}\\
		&\lesssim \frac{C\,\mu_k(B(x, 2^{-j})}{\big[(1+2^jd_G(x, y_1))(1+2^jd_G(x, y_2))\big]^{L}},
	\end{aligned}
	$$
	as desired.
	
\end{proof}

\subsection{Bilinear Dunkl setup and Bilinear Dunkl-Calder\'on-Zygmund theory}\label{sec multi Dunkl-Calder\'on-Zygmund theory}
We discuss a bilinear Dunkl set up as introduced in \cite{SumanWIFMFOIDS, SumanWBMTIDSVSI}. Let us consider  the root system $R$ and the multiplicity function $k$ as in Section \ref{sec prel}. Then
$$R^2:=(R\times(0)_{d})\cup \big((0)_d\times R\big),$$  is a root system in $\mathbb{R}^{2d} $. Also the reflection group associated to $R^2$ is isomorphic to the product $G\times G$. Define $k^2 : R^2\rightarrow \mathbb{C}$ by $$k^2((\lambda ,0))=k(\lambda) \text{ and }k^2((0,\lambda))=k(\lambda)  \text{ for any } \lambda \in R.$$ 
Thus we get a non-negative normalized multiplicity function $k^2$ is  on $R^2$.
Now this choice of the Root system and the multiplicity function allows us to write down the Dunkl objects on $\mathbb{R}^{2d}$ into product of the corresponding objects in $\mathbb{R}^d$.
More precisely, following the notations in Section \ref{sec prel}, for any $x_1,y_1,x_2,y_2\in \mathbb{R}^d$, 
 we have $$d\mu_{k^2}\big((x_1,x_2)\big)=d\mu_k(x_1)d\mu_k(x_2).$$  
$$\text{ and }E_{k^2}\big((x_1,x_2),(y_1,y_2)\big)=E_k(x_1,y_1)E_k(x_2,y_2).$$ 
Also for two reasonable functions $f, g$, we can write
$$\mathcal{F}_{k^2}\left(f \otimes g\right)\big((x_1,x_2)\big)=\mathcal{F}_kf(x_1)\mathcal{F}_kg(x_2)$$
$$\text{and } {\tau}^{k^2}_{(x_1,x_2)}\left(f  \otimes g\right)\big((y_1,y_2)\big)=\tau_{x_1}^kf(y_1)\,\tau_{x_2}^kg(y_2).$$
Moreover, for all $x, y_1, y_2\in \mathbb{R}^d$ and $r, r_1, r_2>0,$ the following relations hold
\begin{align}\label{comparison btween linear and bilinear objects}
\begin{split}
\left\{
\begin{aligned}
\mu_{k^2}\big(B((y_1,y_2), r)\big)\sim
\mu_k\big(B(y_1,r)\big)\,\mu_k\big(B(y_2,r)\big),\\
d_{G\times G}\big((x,x),\, (y_1,y_2)\big)\sim d_G(x,y_1)+d_G(x,y_2)\\
\text{ and }\mu_{k}\big(B(x, r_1+r_2)\big)\geq C\,[\mu_{k}\big(B(x, r_1)\big)+\mu_{k}\big(B(x, r_2)\big)].
\end{aligned}
\right.
\end{split}
\end{align}
Also the bilinear counterparts of all the properties mentioned in Section \ref{sec prel} hold in this case.
\par We now present the definition of bilinear Dunkl–Calder\'on–Zygmund type operators, as introduced in \cite{SumanWBMTIDSVSI}, which will be used repeatedly throughout this article.
\begin{defn}\label{defn of bilinear Dunkl cal zyg ope}
 A bilinear \emph{Dunkl--Calder\'on--Zygmund operator} is a function $\mathcal{T} :\mathcal{S}(\mathbb{R}^d)\times \mathcal{S}(\mathbb{R}^d)\to\mathcal{S}'(\mathbb{R}^d)$ such that for $f, g\in C_c^{\infty}(\mathbb{R}^d)$ with $\sigma (x)\notin  {\rm supp}\,f \cap  {\rm supp}\,g$ for all  $\sigma\in G$, $\mathcal{T}$ can be represented as 
  $$\mathcal{T}(f, g)(x)=\int_{\mathbb{R}^{2d}}K(x,y_1,y_2) f(y_1)g(y_2)\,d\mu_k(y_1)d\mu_k(y_2),$$
  where $K$ is a function defined away from the set $\mathcal{O}(\bigtriangleup _{3})$
  $$:=\big\{(x,y_1,y_2,)\in \mathbb{R}^{3d}:x= \sigma_1 (y_1)=\sigma_2 (y_2)\text{, for some }\sigma_1, \sigma_2 \in G\big\}$$
  and for any $x, x', y_1, y_2\in \mathbb{R}^d$, $K$ satisfies the following regularity conditions for some $0<\epsilon \leq 1$:
  \begin{eqnarray}\label{multiplier size estimate last}
&&|K(x,y_1,y_2)|\\
&\lesssim & \Big[\mu_k\big(B(x, d_G(x,y_1))\big)+\mu_k\big(B(x, d_G(x,y_2))\big)\Big]^{-2} \Big[\frac{d_G(x,y_1)+d_G(x,y_2)}{|x-y_1|+|x-y_2|}\Big]^\epsilon\nonumber
\end{eqnarray}
 for $d_G(x,y_1)+ d_G(x,y_2)>0;$
\begin{eqnarray}\label{multiplier smtness estimate last y2 changing}
    && |K(x,y_1,y_2)-K(x,y_1,y'_2)|\\
&\lesssim & \Big[\mu_k\big(B(x, d_G(x,y_1))\big)+\mu_k\big(B(x, d_G(x,y_2))\big)\Big]^{-2} \Big[\frac{|y_2-y'_2|}{\max\{|x-y_1|,\,|x-y_2|\}}\Big]^\epsilon\nonumber
\end{eqnarray}
for $|y_2-y'_2|<\max\{d_G(x,y_1)/2,\, d_G(x,y_2)/2\}$;
\begin{eqnarray}\label{multiplier smtness estimate last y1 changing}
    && |K(x,y_1,y_2)-K(x,y'_1,y_2)|\\
&\lesssim & \Big[\mu_k\big(B(x, d_G(x,y_1))\big)+\mu_k\big(B(x, d_G(x,y_2))\big)\Big]^{-2}\Big[ \frac{|y_1-y'_1|}{\max\{|x-y_1|,\,|x-y_2|\}}\Big]^\epsilon\nonumber
\end{eqnarray}
for $|y_1-y'_1|<\max\{d_G(x,y_1)/2,\, d_G(x,y_2)/2\}$\\
and 
\begin{eqnarray}\label{multiplier smtness estimate last x changing}
 && |K(x,y_1,y_2)-K(x',y_1,y_2)|\\
&\lesssim & \Big[\mu_k\big(B(x, d_G(x,y_1))\big)+\mu_k\big(B(x, d_G(x,y_2))\big)\Big]^{-2} \Big[\frac{|x-x'|}{\max\{|x-y_1|,\,|x-y_2|\}}\Big]^\epsilon\nonumber
\end{eqnarray}
for $|x-x'|<\max\{d_G(x,y_1)/2,\, d_G(x,y_2)/2\}$.
\end{defn}
 The boundedness of $m$-linear Dunkl–Calder\'on–Zygmund type operators with $m$-fold $G$-invariant Muckenhoupt weights was established in \cite{SumanWBMTIDSVSI}. In this article, we focus on the bilinear case without weights. For clarity, we state the corresponding boundedness result in the theorem below.
\begin{thm}\label{bilinear calderon Zygmund bddness thm}\cite[Theorem 5.3]{SumanWBMTIDSVSI}
    Let $1< p_1,p_2<\infty$, $p$ be the number given by $1/p=1/p_1+1/p_2$, and  $\mathcal{T}$ maps from $L^{q_1}(\mathbb{R}^d,\,d\mu_k)\times L^{q_2}(\mathbb{R}^d,\,d\mu_k)$ to $L^{q,\,\infty}(\mathbb{R}^d,\, d\mu_k)$ for some $q,q_1,q_2$ satisfying $1\leq q_1,q_2<\infty$ with $1/q=1/q_1+1/q_2$. Then the bilinear operator $\mathcal{T}$ is bounded from $L^{p_1}(\mathbb{R}^d,\,d\mu_k)\times L^{p_2}(\mathbb{R}^d,\,d\mu_k)$ to $L^{p}(\mathbb{R}^d,\, d\mu_k)$.
\end{thm}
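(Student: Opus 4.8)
Since this is an extrapolation statement, the natural route is the Dunkl analogue of the multilinear Calder\'on--Zygmund theory of Grafakos--Torres: a single weak-type bound together with the kernel regularity of Definition~\ref{defn of bilinear Dunkl cal zyg ope} should be boosted to the full strong-type range. The backbone of the argument is the endpoint estimate that $\mathcal{T}$ maps $L^1\times L^1$ into weak $L^{1/2}$, that is,
\[
\mathcal{T}: L^1(\mathbb{R}^d, d\mu_k) \times L^1(\mathbb{R}^d, d\mu_k) \to L^{1/2,\infty}(\mathbb{R}^d, d\mu_k).
\]
Once this is in hand, multilinear Marcinkiewicz interpolation between this endpoint and the hypothesized bound $L^{q_1}\times L^{q_2}\to L^{q,\infty}$ yields strong-type estimates for a range of exponents clustering near $(1,1)$, and a duality argument then sweeps out the entire open region $1<p_1,p_2<\infty$.

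To prove the endpoint bound I would fix $\lambda>0$ and, after normalizing $\|f\|_{L^1(d\mu_k)}=\|g\|_{L^1(d\mu_k)}=1$, run a Calder\'on--Zygmund decomposition of each of $f$ and $g$ at a height $\gamma$ comparable to $\lambda^{1/2}$ in the space of homogeneous type $(\mathbb{R}^d, d_G, \mu_k)$. This produces splittings $f=\mathbf{g}_f+b_f$ and $g=\mathbf{g}_g+b_g$, where the good parts are bounded pointwise by $\lesssim\gamma$ while the bad parts $b_f=\sum_Q b_{f,Q}$, $b_g=\sum_{Q'} b_{g,Q'}$ are supported on balls with mean zero and total mass controlled by $\gamma^{-1}$. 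Expanding $\mathcal{T}(f,g)$ by bilinearity into the four pieces good--good, good--bad, bad--good and bad--bad, I would estimate the good--good term directly from the hypothesis together with the $L^\infty$ and $L^1$ control of the good parts, and estimate each of the three remaining pieces by passing to the kernel representation away from the orbit $\mathcal{O}(\bigtriangleup_{3})$ and exploiting the mean-zero cancellation of the bad atoms against the smoothness conditions \eqref{multiplier smtness estimate last y2 changing}--\eqref{multiplier smtness estimate last x changing}. The exceptional set is taken to be a fixed dilate of the union of the orbits $\mathcal{O}(Q)$ of the selected balls; its measure is controlled using \eqref{orbit volume compa} and the doubling property \eqref{VOLRADREL}, which keeps the bad contribution within the weak-$L^{1/2}$ bound.

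For the passage from the endpoint to the full range, I would first invoke multilinear Marcinkiewicz interpolation to obtain strong boundedness $L^{p_1}\times L^{p_2}\to L^p$ for $(1/p_1,1/p_2)$ in a neighbourhood of $(1,1)$ inside the simplex. Observing that the two formal transposes $\mathcal{T}^{*1}$ and $\mathcal{T}^{*2}$ of $\mathcal{T}$ are again bilinear Dunkl--Calder\'on--Zygmund operators --- the kernel conditions \eqref{multiplier size estimate last}--\eqref{multiplier smtness estimate last x changing} are symmetric under interchanging the roles of $x$, $y_1$ and $y_2$ --- each transpose also enjoys the endpoint and hence a near-$(1,1)$ strong bound. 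Dualizing these estimates moves the region of known boundedness toward the other two corners of the simplex, and a final interpolation fills in the whole open range $1<p_1,p_2<\infty$.

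The main obstacle is the endpoint weak bound, specifically the Calder\'on--Zygmund decomposition and the bad-part estimates in the Dunkl framework. Because $d_G$ satisfies the triangle inequality but is not a genuine metric, and because the kernel is singular on the entire orbit diagonal $\mathcal{O}(\bigtriangleup_{3})$ rather than on a single diagonal, the exceptional sets must be built from orbits of balls and every use of the kernel smoothness must account for all $\sigma\in G$. Moreover, since Dunkl translation is not known to be $L^p$-bounded, one cannot fall back on convolution estimates and must argue directly with the pointwise kernel bounds. Controlling the bad--bad term, where cancellation in both variables has to be exploited simultaneously while staying off $\mathcal{O}(\bigtriangleup_{3})$, is the most delicate point.
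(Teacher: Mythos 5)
This statement is not proved in the paper at all: it is imported verbatim, with attribution, as Theorem 5.3 of Mukherjee--Parui \cite{SumanWBMTIDSVSI}, and is then used as a black box (together with Proposition \ref{bddness of general 3paraproducts}) to upgrade the paraproduct bounds to the full range in the proof of Theorem \ref{MAIN bddnees of 3paraproducts}. So there is no internal proof to compare your attempt against; the relevant comparison would be with \cite{SumanWBMTIDSVSI} itself, where the result is part of a weighted, $G$-invariant multiple-Muckenhoupt theory of which the statement here is the unweighted special case.

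Taken on its own terms, your outline is the natural one and follows the classical Grafakos--Torres extrapolation scheme transplanted to the Dunkl setting: a weak endpoint $L^1\times L^1\to L^{1/2,\infty}$ proved by a Calder\'on--Zygmund decomposition in the doubling space $(\mathbb{R}^d,|\cdot|,\mu_k)$, with exceptional sets replaced by orbit dilates $\mathcal{O}(B)$ controlled via \eqref{orbit volume compa}, followed by interpolation and duality. Three places where the sketch is thinner than a proof would need to be. First, your claim that conditions \eqref{multiplier size estimate last}--\eqref{multiplier smtness estimate last x changing} are symmetric under permuting $x$, $y_1$, $y_2$ is not literally true as written, since every ball there is centred at $x$; one needs \eqref{V(x,y,d(x,y) comparison}, \eqref{VOLRADREL} and the triangle inequality for $d_G$ to verify that the transposed kernels satisfy equivalent conditions, and this must actually be checked, not asserted. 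Second, the claim that ``duality sweeps out the entire open region'' hides a genuine subtlety: to run the endpoint argument for a transpose $\mathcal{T}^{*j}$ one needs an initial bound for $\mathcal{T}^{*j}$, and when the hypothesized exponent satisfies $q\le 1$ the segment obtained by interpolating the hypothesis with the $(1,1,1/2)$ endpoint contains no dualizable (i.e.\ $p>1$) strong bound; in the classical setting this is resolved by Lorentz-space duality of weak-type estimates and the specific multilinear interpolation theorems of Grafakos--Torres, and an honest proof must import that machinery here. Third, the steps you yourself flag as the hard ones --- the bad-part estimates against the orbit diagonal $\mathcal{O}(\bigtriangleup_{3})$, in particular the bad--bad term --- are named rather than carried out, so what you have is a credible plan for a proof rather than a proof.
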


\section{Boundedness of Dunkl Paraproducts}\label{sec Dunkl paraproduct}
In this section, we study the boundedness of Dunkl paraproduct operators on products of Lebesgue spaces. In fact, the following proposition establishes boundedness results for a particular class of Dunkl paraproduct operators.
\begin{prop}\label{bddness of general 3paraproducts}
  For any $\psi, \phi, \theta \in C_c^{\infty}(\mathbb{R}^d)$, if the supports of at least two among $\psi$, $\phi$, and $\theta$ do not contain $0$, then the Dunkl paraproduct operator $\Pi[\theta, \psi, \phi]$ is bounded from $L^{q_1}(\mathbb{R}^d,\,d\mu_k)\times L^{q_2}(\mathbb{R}^d,\,d\mu_k)$ to $L^{q}(\mathbb{R}^d,\, d\mu_k)$ for all $q,q_1,q_2$ satisfying $1< q, q_1,q_2<\infty$ with $1/q=1/q_1+1/q_2$. 
\end{prop}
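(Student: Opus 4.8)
The plan is to prove Proposition~\ref{bddness of general 3paraproducts} by showing that each admissible paraproduct operator $\Pi[\theta,\psi,\phi]$ is a bilinear Dunkl--Calder\'on--Zygmund operator in the sense of Definition~\ref{defn of bilinear Dunkl cal zyg ope}, and then invoking the general boundedness Theorem~\ref{bilinear calderon Zygmund bddness thm}. To apply that theorem I need two ingredients: first, a single endpoint bound, say $L^{q_1}\times L^{q_2}\to L^{q,\infty}$ for one admissible triple (for example $q_1=q_2=2$, $q=1$), which will come directly from Plancherel/$L^2$ arguments in the Dunkl setting together with the support conditions; and second, verification that the Schwartz kernel $K(x,y_1,y_2)$ of $\Pi[\theta,\psi,\phi]$ satisfies the size estimate \eqref{multiplier size estimate last} and the three smoothness estimates \eqref{multiplier smtness estimate last y2 changing}--\eqref{multiplier smtness estimate last x changing}.

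The heart of the argument is writing down and estimating the kernel. Unwinding Definition~\ref{defn of paraproducts} and the definition of Dunkl convolution, the kernel of $\Pi[\theta,\psi,\phi]$ is
\begin{equation*}
K(x,y_1,y_2)=\sum_{j\in\mathbb Z}\int_{\mathbb R^d}\tau^k_x\Theta_j(-u)\,\tau^k_u\Psi_j(-y_1)\,\tau^k_u\Phi_j(-y_2)\,d\mu_k(u),
\end{equation*}
where each of $\Theta_j,\Psi_j,\Phi_j$ is a scaled Schwartz function whose Dunkl transform is compactly supported. The key analytic inputs are Proposition~\ref{smooth function all decay}, which supplies the rapid off-diagonal decay and the H\"older-type difference bound for the Dunkl translates $\tau^k_u\Psi_j$ etc.\ (after rescaling via $\tau^k_x(f_t)=(\tau^k_{t^{-1}x}f)_t$), and Proposition~\ref{almost orhto esti thm}, which lets me carry out the $u$-integration at each scale $j$. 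For a fixed $j$, combining the three decay factors $(1+2^jd_G(x,u))^{-3L}(1+2^jd_G(u,y_1))^{-3L}(1+2^jd_G(u,y_2))^{-3L}$ under the integral and applying Proposition~\ref{almost orhto esti thm} collapses the $u$-integral to a bound of the shape $\mu_k(B(x,2^{-j}))^{-1}[(1+2^jd_G(x,y_1))(1+2^jd_G(x,y_2))]^{-L}$ times the volume normalizations. Summing over $j\in\mathbb Z$ is then a geometric-type sum controlled by the scale comparable to $d_G(x,y_1)+d_G(x,y_2)$; the volume doubling relations \eqref{VOLRADREL} and \eqref{V(x,y,d(x,y) comparison} convert the resulting expression into the required form $[\mu_k(B(x,d_G(x,y_1)))+\mu_k(B(x,d_G(x,y_2)))]^{-2}$ with a decay gain $[(d_G(x,y_1)+d_G(x,y_2))/(|x-y_1|+|x-y_2|)]^\epsilon$.

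The smoothness estimates are handled by the same scheme, except that in each case one of the three translate factors is replaced by a difference (in $y_1$, in $y_2$, or in $x$); here part \ref{difference decay} of Proposition~\ref{smooth function all decay} produces the extra factor $|y_2-y_2'|$ (respectively $|y_1-y_1'|$ or $|x-x'|$) against a scale $2^j$, which after summation yields the $\epsilon$-power of the normalized increment demanded by \eqref{multiplier smtness estimate last y2 changing}--\eqref{multiplier smtness estimate last x changing}. The role of the hypothesis that at least two of $\psi,\phi,\theta$ vanish near $0$ is crucial: the corresponding two factors then have Dunkl transforms supported away from the origin, which provides cancellation (a moment/mean-zero condition on the associated $\Theta_j$ or the product structure) that makes the sum over $j$ converge at both ends $j\to+\infty$ and $j\to-\infty$; without it the low-frequency or high-frequency tail would diverge.

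The main obstacle I anticipate is twofold. First, because Dunkl translation is not known to be $L^p$-bounded for $p\neq 2$ and does not respect the usual Leibniz rule, I cannot manipulate $\tau^k_u(\Psi_j\cdot\Phi_j)$ naively; I must keep the convolution structure explicit and rely only on the pointwise translate estimates of Proposition~\ref{smooth function all decay}, which is exactly why the almost-orthogonality lemma is phrased purely in terms of the orbit distance $d_G$ and the measures $\mu_k(B(\cdot,\cdot))$. Second, the interplay between the Euclidean distance $|x-y_i|$ appearing in the extra factor $(1+|x-y_i|)^{-1}$ of Proposition~\ref{smooth function all decay} and the orbit distance $d_G(x,y_i)$ governing the volume terms must be reconciled when producing the $\epsilon$-gain $[(d_G(x,y_1)+d_G(x,y_2))/(|x-y_1|+|x-y_2|)]^\epsilon$; managing this gap, together with the uniform (in $j$) bookkeeping of volumes through \eqref{VOLRADREL}, is where the technical care is concentrated.
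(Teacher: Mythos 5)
Your proposal inverts the paper's logical structure, and in doing so it runs into a genuine gap. In the paper, the present proposition is the \emph{input} to the bilinear Dunkl--Calder\'on--Zygmund machinery, not a consequence of it: Theorem \ref{bilinear calderon Zygmund bddness thm} cannot be applied to $\Pi[\theta,\psi,\phi]$ until one bound $L^{q_1}\times L^{q_2}\to L^{q,\infty}$ is already known, and the paper obtains exactly that by a soft duality argument --- pair $\Pi[\theta,\psi,\phi](f,g)$ with $h\in L^{q'}$, rewrite the pairing via Plancherel as the symmetric trilinear form $\sum_j\int(\Psi_j*_kf)(\Phi_j*_kg)(\Theta_j*_kh)\,d\mu_k$, apply Cauchy--Schwarz in $j$ (square functions on the two slots whose symbols vanish near $0$, a maximal function on the third), and finish with H\"older and the Dunkl Littlewood--Paley theory of \cite{SumanWBMTIDSVSI}. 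The kernel estimates plus Theorem \ref{bilinear calderon Zygmund bddness thm} are used only afterwards, to extend the range to $p\le 1$ in Theorem \ref{MAIN bddnees of 3paraproducts}. Your plan relies on the same CZ theorem but replaces its hypothesis by the claim that an endpoint bound such as $L^2\times L^2\to L^{1,\infty}$ ``comes directly from Plancherel and the support conditions.'' That is precisely where the proposal fails: the direct argument
$\sum_j\|\Theta_j*_k(F_jG_j)\|_{1}\lesssim\sum_j\|F_j\|_{2}\|G_j\|_{2}\lesssim\|f\|_{2}\|g\|_{2}$
requires square-summability of \emph{both} $\|\Psi_j*_kf\|_2$ and $\|\Phi_j*_kg\|_2$, hence that both $\psi$ and $\phi$ have supports avoiding $0$. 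In the other two configurations allowed by the hypothesis --- say $\psi$ and $\theta$ supported away from $0$ but $0\in{\rm supp}\,\phi$, which is exactly the configuration of $\Pi_2$ and $\Pi_3$ needed later for the Leibniz rule --- the sum $\sum_j\|\Phi_j*_kg\|_2^2$ is not controlled by $\|g\|_2^2$, and the alternative of dualizing against $h\in L^\infty$ fails because no square-function estimate for $\big(\sum_j|\Theta_j*_kh|^2\big)^{1/2}$ is available at $q'=\infty$. Any repair of this step forces you back to the duality/Littlewood--Paley argument with $q'<\infty$, i.e., to the paper's proof --- and that argument, once carried out, already yields the full statement for all $1<q,q_1,q_2<\infty$, making the CZ detour superfluous for this proposition.

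A second, related misreading: you locate the hypothesis ``at least two supports avoid $0$'' inside the kernel estimates, saying it supplies the cancellation that makes the sum over $j$ converge. In fact the size and regularity bounds \eqref{multiplier size estimate last}--\eqref{multiplier smtness estimate last x changing} hold for arbitrary $\psi,\phi,\theta\in C_c^\infty(\mathbb{R}^d)$: in the paper's verification the convergence of the $j$-sum comes solely from the volume factors, Proposition \ref{almost orhto esti thm}, the splitting into $2^j\big(d_G(x,y_1)+d_G(x,y_2)\big)\le 1$ versus $>1$, and the choice $L>3d_k$; no vanishing near the origin is invoked there. The support hypothesis does its work exclusively in the initial boundedness, through the Littlewood--Paley square functions (indeed, with all three supports containing $0$ the operator itself is unbounded even though its kernel satisfies all four CZ conditions). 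So the step your proposal treats as routine is the one that carries the entire content of the proposition.
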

\begin{proof}
Here we take
 $${\rm supp}\, \psi \subset \{\xi \in \mathbb{R}^d: 1/r_1\leq |\xi|\leq r_1\}$$ 
 $$\text{and } {\rm supp}\, \theta \subset \{\xi \in \mathbb{R}^d: 1/r_2\leq |\xi|\leq r_2\},$$ for some $r_1, r_2>1$ and consider $h$ in $\mathcal{S}(\mathbb{R}^d)$ such that $||h||_{L^{q'}(d\mu_k)}$=1. Then, by repeatedly applying the Plancherel formula for the Dunkl transform, using the Cauchy--Schwarz inequality for the $\ell^2$ norm, H\"older's inequality, and finally the Littlewood--Paley theory for the Dunkl transform \cite[Theorem 3.1 and Theorem 3.4]{SumanWBMTIDSVSI}, we can easily write

  \begin{eqnarray*}
   &&\big| \int_{\mathbb{R}^d}  \Pi[\theta, \psi, \phi] (fg)(x)\, h(x) \,d\mu_k(x) \big|\\
   &=& \big| \sum\limits_{j\in \mathbb{Z}}\int_{\mathbb{R}^d} \mathcal{F}_k^{-1} \big( \theta_j \, \mathcal{F}_k((\Psi_j*_kf)(\cdot)(\Phi_j*_kg)(\cdot))\big)(x) \, h(x) \,d\mu_k(x) \big|\\
   &=& \big| \sum\limits_{j\in \mathbb{Z}}\int_{\mathbb{R}^d}  (\Psi_j*_kf)(x)\,(\Phi_j*_kg)(x) \, (\Theta_j*_kh)(x) \,d\mu_k(x) \big|\\
   &\leq & \int_{\mathbb{R}^d} \big(\sum\limits_{j\in \mathbb{Z}} | (\Psi_j*_kf)(x)\,(\Phi_j*_kg)(x)|^2\big)^{1/2} \big(\sum\limits_{j\in \mathbb{Z}} |(\Theta_j*_kh)(x)|^2\big)^{1/2}\, d\mu_k(x)\\
   &\leq & \big\|\sum\limits_{j\in \mathbb{Z}} | (\Psi_j*_kf)(\cdot)\,(\Phi_j*_kg)(\cdot)|^2\big)^{1/2}\big\|_{L^{q}(d\mu_k)} \big\| \sum\limits_{j\in \mathbb{Z}} |(\Theta_j*_kh)|^2\big)^{1/2}\big\|_{L^{q'}(d\mu_k)}\\
   &\leq & \big\|\sum\limits_{j\in \mathbb{Z}} | (\Psi_j*_kf)|^2\big)^{1/2}\big\|_{L^{q_1}(d\mu_k)} \big\| \sup\limits_{j\in \mathbb{Z}} |(\Phi_j*_kg)|\big\|_{L^{q_2}(d\mu_k)}\sum\limits_{j\in \mathbb{Z}} |(\Theta_j*_kh)|^2\big)^{1/2}\big\|_{L^{q'}(d\mu_k)}\\
   &\leq & C\, ||f||_{L^{q_1}(d\mu_k)}||g||_{L^{q_2}(d\mu_k)}||h||_{L^{q'}(d\mu_k)}.
  \end{eqnarray*}
 For the remaining cases, the necessary routine adjustments are straightforward and left to the reader.
\end{proof}
The previous proposition does not include the boundedness of the operators for the full range $1/2<q<\infty$. However, using the bilinear Dunkl--Calder\'on--Zygmund theory (Theorem \ref{bilinear calderon Zygmund bddness thm}), we establish the boundedness of these Dunkl paraproduct operators over the full range, completing the proof of Theorem~\ref{MAIN bddnees of 3paraproducts}.

\begin{proof}[{\bf Proof of Theorem \ref{MAIN bddnees of 3paraproducts}}]
We begin by noting that the operator $\Pi[\theta, \psi, \phi]$ admits an integral representation of the form
\begin{equation*}
 \Pi[\theta, \psi, \phi] (f,g)(x) =\int_{\mathbb{R}^{2d}} K(x, y_1, y_2) f(y_1)g(y_2) \, d\mu_k(y_1)\,d\mu_k(y_2),
\end{equation*}
where $K$ is given by
$$ K(x,y_1,y_2)= \sum\limits_{j\in \mathbb{Z}}\int_{\mathbb{R}^d}\tau^k_{-u}\Theta_j(x) \tau^k_u \Psi_j(-y_1) \tau^k_u \Phi_j(-y_2)\, d\mu_k(u).$$
Thus, since Proposition \ref{bddness of general 3paraproducts} has been proved, to apply Theorem \ref{bilinear calderon Zygmund bddness thm}, it remains only to verify the conditions \eqref{multiplier size estimate last}, \eqref{multiplier smtness estimate last y2 changing}, \eqref{multiplier smtness estimate last y1 changing}, and \eqref{multiplier smtness estimate last x changing} for the kernel $K$ above.
\par Let us verify the condition \eqref{multiplier size estimate last} first. Take $d_G(x,y_1)+d_G(x,y_2)>0$, then from Proposition \ref{smooth function all decay}\ref{size decay}, Proposition \ref{almost orhto esti thm}, and \eqref{Volofball}, we have 
\begin{eqnarray*}
&& | K(x,y_1,y_2)| \\
&\leq & C\, \sum\limits_{j\in \mathbb{Z}}2^{3jd_k}\int_{\mathbb{R}^d} \mu_k(B(2^jx,1))^{-1}(1+2^jd_G(u,x))^{-3L}(1+2^j|u-x|)^{-1}\\
 &&\times\mu_k(B(2^jy_1,1))^{-1}(1+2^jd_G(u,y_1))^{-3L}(1+2^j|u-y_1|)^{-1}\\
 &&\times\mu_k(B(2^jy_2,1))^{-1}(1+2^jd_G(u,y_2))^{-3L}(1+2^j|u-y_2|)^{-1}\, d\mu_k(u)\\
 &\leq & C\, \sum\limits_{j\in \mathbb{Z}}\frac{2^{3jd_k}}{\mu_k(B(2^jx,1))\mu_k(B(2^jy_1,1))\mu_k(B(2^jy_2,1))}\\
 &&\times\int_{\mathbb{R}^d}\frac{\{1+(2^j|u-x|+2^j|u-y_1|)+ (2^j|u-x|+2^j|u-y_2|)\}^{-1}d\mu_k(u)}{\big[(1+2^jd_G(x, u))(1+2^jd_G(y_1, u))(1+2^jd_G(y_2, u))\big]^{3L}}\\
 &\leq & C\, \sum\limits_{j\in \mathbb{Z}}\frac{1}{2^j(|x-y_1|+|x-y_2|)}\frac{2^{3jd_k}}{\mu_k(B(2^jx,1))\mu_k(B(2^jy_1,1))\mu_k(B(2^jy_2,1))}\\
 &&\times\int_{\mathbb{R}^d}\frac{d\mu_k(u)}{\big[(1+2^jd_G(x, u))(1+2^jd_G(y_1, u))(1+2^jd_G(y_2, u))\big]^{3L}}\\
&\leq & C\, \sum\limits_{j\in \mathbb{Z}}\frac{1}{2^j(|x-y_1|+|x-y_2|)}\frac{2^{3jd_k}}{\mu_k(B(2^jx,1))\mu_k(B(2^jy_1,1))\mu_k(B(2^jy_2,1))}\\
 &&\times \frac{\mu_k(B(x, 2^{-j})}{\big[1+2^j(d_G(x, y_1))+ d_G(x, y_2))\big]^{L}}\\
&\leq & C\, \sum\limits_{j\in \mathbb{Z}}\frac{1}{2^j(|x-y_1|+|x-y_2|)}\frac{2^{2jd_k}}{\mu_k(B(2^jx,1))\mu_k(B(2^jy_1,1))\mu_k(B(2^jy_2,1))}\\
 &&\times \frac{\mu_k(B(2^jx, 1)}{\big[1+2^j(d_G(x, y_1))+ d_G(x, y_2))\big]^{L}}\\
&\leq & C\,\frac{1}{|x-y_1|+|x-y_2|}\sum\limits_{j\in \mathbb{Z}}\frac{2^{-j}2^{2jd_k}}{\big[1+2^j(d_G(x, y_1))+ d_G(x, y_2))\big]^{L}}\\
&&\times\frac{1}{\mu_k(B(2^jy_1,1))\mu_k(B(2^jy_2,1))}.
\end{eqnarray*}
Now, to estimate the sum above, we use \eqref{Volofball} again, use the relations \eqref{comparison btween linear and bilinear objects}, and break it into two parts as follows\\
\begin{eqnarray*}
    &&\sum\limits_{j\in \mathbb{Z}}\frac{2^{-j}2^{2jd_k}}{\big[1+2^j(d_G(x, y_1))+ d_G(x, y_2))\big]^{L}}\frac{1}{\mu_k(B(2^jy_1,1))\mu_k(B(2^jy_2,1))}\\
    &=& \sum\limits_{j\in \mathbb{Z}}\frac{2^{-j}}{\big[1+2^j(d_G(x, y_1))+ d_G(x, y_2))\big]^{L}}\frac{1}{\mu_k(B(y_1,2^{-j}))\mu_k(B(y_2,2^{-j}))}\\
     &\leq& C\,\sum\limits_{j\in \mathbb{Z}}\frac{2^{-j}}{\big[1+2^j(d_G(x, y_1))+ d_G(x, y_2))\big]^{L}}
    \frac{1}{\mu_{k^2}(B((y_1,y_2),2^{-j}))}\\
    &=&C\,\sum\limits_{j\in \mathbb{Z}: 2^j(d_G(x, y_1))+ d_G(x, y_2))\leq 1} \ldots+ \sum\limits_{j\in \mathbb{Z}: 2^j(d_G(x, y_1))+ d_G(x, y_2))>1}\ldots .
\end{eqnarray*}
For the first part, we use \eqref{VOLRADREL} in $\mathbb{R}^{2d}$, \eqref{comparison btween linear and bilinear objects}, \eqref{V(x,y,d(x,y) comparison} in $\mathbb{R}^{2d}$, and \eqref{comparison btween linear and bilinear objects} repeatedly so that 
\begin{eqnarray*}
  &&\sum\limits_{j\in \mathbb{Z}: 2^j(d_G(x, y_1))+ d_G(x, y_2))\leq 1}  \frac{2^{-j}}{\big[1+2^j(d_G(x, y_1))+ d_G(x, y_2))\big]^{L}}
    \frac{1}{\mu_{k^2}(B((y_1,y_2),2^{-j}))}\\
    &\leq &\frac{C}{\mu_{k^2}(B((y_1,y_2),d_G(x, y_1))+ d_G(x, y_2)))}\sum\limits_{j\in \mathbb{Z}: 2^j(d_G(x, y_1))+ d_G(x, y_2))\leq 1} \\
    &&\times\frac{2^{-j}}{\big[1+2^j(d_G(x, y_1))+ d_G(x, y_2))\big]^{L}} \left[\frac{d_G(x, y_1))+ d_G(x, y_2))}{2^{-j}}\right]^{2d} \\
    &\leq & C\frac{d_G(x,y_1)+ d_G(x,y_2)}{\mu_{k^2}(B((y_1,y_2),d_G(x, y_1))+ d_G(x, y_2)))}\sum\limits_{j\in \mathbb{Z}: 2^j(d_G(x, y_1))+ d_G(x, y_2))\leq 1}\\
    &&\times\big(2^j(d_G(x, y_1))+ d_G(x, y_2))\big)^{2d-1}\\
    &\leq & C\frac{d_G(x,y_1)+ d_G(x,y_2)}{\mu_{k^2}(B((y_1,y_2),d_G(x, y_1))+ d_G(x, y_2)))}\\
    &\leq &C \frac{d_G(x,y_1)+ d_G(x,y_2)}{\mu_{k^2}(B((y_1,y_2),d_{G\times G}((x,x)), (y_1, y_2)))}\\
    &\leq & C \frac{d_G(x,y_1)+ d_G(x,y_2)}{\mu_{k^2}(B((x,x),d_{G\times G}((x,x)), (y_1, y_2)))}\\
    &\leq & C\, \big(d_G(x,y_1)+ d_G(x,y_2)\big)\Big[\mu_k\big(B(x, d_G(x,y_1))\big)+\mu_k\big(B(x, d_G(x,y_2))\big)\Big]^{-2}
\end{eqnarray*}
For the second part, we proceed similarly and obtain
\begin{eqnarray*}
  &&\sum\limits_{j\in \mathbb{Z}: 2^j(d_G(x, y_1))+ d_G(x, y_2))> 1}  \frac{2^{-j}}{\big[1+2^j(d_G(x, y_1))+ d_G(x, y_2))\big]^{L}}
    \frac{1}{\mu_{k^2}(B((y_1,y_2),2^{-j}))}\\
    &\leq &\frac{C}{\mu_{k^2}(B((y_1,y_2),d_G(x, y_1))+ d_G(x, y_2)))}\sum\limits_{j\in \mathbb{Z}: 2^j(d_G(x, y_1))+ d_G(x, y_2))> 1} \\
    &&\times\frac{2^{-j}}{\big[1+2^j(d_G(x, y_1))+ d_G(x, y_2))\big]^{L}} \left[\frac{d_G(x, y_1))+ d_G(x, y_2))}{2^{-j}}\right]^{2d_k} \\ 
    &\leq & C\frac{d_G(x,y_1)+ d_G(x,y_2)}{\mu_{k^2}(B((y_1,y_2),d_G(x, y_1))+ d_G(x, y_2)))}\sum\limits_{j\in \mathbb{Z}: 2^j(d_G(x, y_1))+ d_G(x, y_2))> 1}\\
    &&\times\big(2^j(d_G(x, y_1))+ d_G(x, y_2))\big)^{2d_k-L-1}.\\
\end{eqnarray*}
Hence, in the same way as for the first part, we can show that the above term is dominated by
$$\big( d_G(x,y_1)+ d_G(x,y_2)\big)\big[\mu_k\big(B(x, d_G(x,y_1))\big)+\mu_k\big(B(x, d_G(x,y_2))\big)\big]^{-2}.$$ 
Thus, the verification of condition \eqref{multiplier size estimate last} is complete.
\par Next, we verify the condition \eqref{multiplier smtness estimate last y2 changing}. Let $|y_2 - y'_2| < \max\{d_G(x, y_1)/2,\, d_G(x, y_2)/2\}$ and consider the difference
\begin{eqnarray*}
   &&|K(x,y_1,y_2)-K(x,y_1,y'_2)|\\
   &\leq& \sum\limits_{j\in \mathbb{Z}}\int_{\mathbb{R}^d}|\tau^k_{-u}\Theta_j(x)|\,| \tau^k_u \Psi_j(-y_1) |\,|\tau^k_u \Phi_j(-y_2)-\tau^k_u \Phi_j(-y'_2)|\, d\mu_k(u).
\end{eqnarray*}
When $|2^jy_2-2^jy'_2|\leq 1$, by Proposition \ref{smooth function all decay}\ref{size decay} and \ref{difference decay}, we obtain
\begin{eqnarray*}
   &&\int_{\mathbb{R}^d}|\tau^k_{-u}\Theta_j(x)|\,| \tau^k_u \Psi_j(-y_1) |\,|\tau^k_u \Phi_j(-y_2)-\tau^k_u \Phi_j(-y'_2)|\, d\mu_k(u)\\
   &\leq & C\, 2^{3jd_k}2^j|y_2-y'_2|\int_{\mathbb{R}^d} \mu_k(B(2^jx,1))^{-1}(1+2^jd_G(u,x))^{-3L}(1+2^j|u-x|)^{-1}\\
 &&\times\mu_k(B(2^jy_1,1))^{-1}(1+2^jd_G(u,y_1))^{-3L}(1+2^j|u-y_1|)^{-1}\\
 &&\times\mu_k(B(2^jy_2,1))^{-1}(1+2^jd_G(u,y_2))^{-3L}(1+2^j|u-y_2|)^{-1}\, d\mu_k(u)\\
 &\leq& C\, \big[\mu_k\big(B(x, d_G(x,y_1))\big)+\mu_k\big(B(x, d_G(x,y_2))\big)\big]^{-2}\frac{|y_2-y'_2|}{|x-y_1|+|x-y_2|}\\
 &&\times \left\{ \frac{\big(2^j(d_G(x, y_1))+ d_G(x, y_2))\big)^{2d} + \big(2^j(d_G(x, y_1))+ d_G(x, y_2))\big)^{2d_k}}{\big[1+2^j(d_G(x, y_1))+ d_G(x, y_2))\big]^{L}} \right\},
   \end{eqnarray*}
where the last step is obtained by performing calculations similar to those carried out in the verification of condition \eqref{multiplier size estimate last}.

\par On the other hand, when $|2^jy_2-2^jy'_2| > 1$, Proposition \ref{smooth function all decay}\ref{size decay} and the condition $1<|2^jy_2-2^jy'_2|$ yields
\begin{eqnarray*}
   &&\int_{\mathbb{R}^d}|\tau^k_{-u}\Theta_j(x)|\,| \tau^k_u \Psi_j(-y_1) |\,(|\tau^k_u \Phi_j(-y_2)|+|\tau^k_u \Phi_j(-y'_2)|)\, d\mu_k(u)\\
   &\leq & C\, |2^jy_2-2^jy'_2|\big[2^{3jd_k}\int_{\mathbb{R}^d} \mu_k(B(2^jx,1))^{-1}(1+2^jd_G(u,x))^{-3L}(1+2^j|u-x|)^{-1}\\
 &&\times\mu_k(B(2^jy_1,1))^{-1}(1+2^jd_G(u,y_1))^{-3L}(1+2^j|u-y_1|)^{-1}\\
 &&\times\mu_k(B(2^jy_2,1))^{-1}(1+2^jd_G(u,y_2))^{-3L}(1+2^j|u-y_2|)^{-1}\, d\mu_k(u)\\
 &&+ 2^{3jd_k}\int_{\mathbb{R}^d} \mu_k(B(2^jx,1))^{-1}(1+2^jd_G(u,x))^{-3L}(1+2^j|u-x|)^{-1}\\
 &&\times\mu_k(B(2^jy_1,1))^{-1}(1+2^jd_G(u,y_1))^{-3L}(1+2^j|u-y_1|)^{-1}\\
 &&\times\mu_k(B(2^jy'_2,1))^{-1}(1+2^jd_G(u,y'_2))^{-3L}(1+2^j|u-y'_2|)^{-1}\, d\mu_k(u)\big]\\
 &\leq & \big[\mu_k\big(B(x, d_G(x,y_1))\big)+\mu_k\big(B(x, d_G(x,y_2))\big)\big]^{-2}\frac{|y_2-y'_2|}{|x-y_1|+|x-y_2|}\\
 &&\times \left\{ \frac{\big(2^j(d_G(x, y_1))+ d_G(x, y_2))\big)^{2d} + \big(2^j(d_G(x, y_1))+ d_G(x, y_2))\big)^{2d_k}}{\big[1+2^j(d_G(x, y_1))+ d_G(x, y_2))\big]^{L}} \right\}\\
 && + \big[\mu_k\big(B(x, d_G(x,y_1))\big)+\mu_k\big(B(x, d_G(x,y'_2))\big)\big]^{-2}\frac{|y_2-y'_2|}{|x-y_1|+|x-y'_2|}\\
 &&\times \left\{ \frac{\big(2^j(d_G(x, y_1))+ d_G(x, y'_2))\big)^{2d} + \big(2^j(d_G(x, y_1))+ d_G(x, y'_2))\big)^{2d_k}}{\big[1+2^j(d_G(x, y_1))+ d_G(x, y'_2))\big]^{L}} \right\},
\end{eqnarray*}
where the last step is derived in the same way as in the case $|2^j y_2 - 2^j y'_2| \leq 1$.
\par Now, the bilinear setup introduced in Section \ref{sec multi Dunkl-Calder\'on-Zygmund theory} allows us to write
$$d_G(x, y_1)\leq d_{G\times G}((x,x),\, (y_1,y_2)) \text{ and }d_G(x, y_2)\leq d_{G\times G}((x,x),\, (y_1,y_2)),$$
 and therefore the condition $|y_2 - y'_2| < \max\{d_G(x, y_1)/2,\, d_G(x, y_2)/2\}$ turns into the condition 
\begin{eqnarray*}
|y_2 - y'_2|&\leq& d_{G\times G}((x,x),\, (y_1,y_2))/2.
\end{eqnarray*}
We can now show that the above condition implies that
$$d_G(x, y_1))+ d_G(x, y'_2)\sim d_G(x, y_1))+ d_G(x, y_2),$$
$$ (|x-y_1| + |x-y'_2|\sim (|x-y_1| + |x-y_2|,$$
$$\text{ and }\mu_k\big(B(x, d_G(x,y_1))\big)+\mu_k\big(B(x, d_G(x,y'_2))\big)$$
$$\sim \mu_k\big(B(x, d_G(x,y_1))\big)+\mu_k\big(B(x, d_G(x,y_2))\big).$$
\par So, finally, when $|2^j y_2 - 2^j y'_2| > 1$, keeping the above estimate in mind, we can write
\begin{eqnarray*}
   &&\int_{\mathbb{R}^d}|\tau^k_{-u}\Theta_j(x)|\,| \tau^k_u \Psi_j(-y_1) |\,|\tau^k_u \Phi_j(-y_2)-\tau^k_u \Phi_j(-y'_2)|\, d\mu_k(u)\\
   &\leq& C\, \big[\mu_k\big(B(x, d_G(x,y_1))\big)+\mu_k\big(B(x, d_G(x,y_2))\big)\big]^{-2}\frac{|y_2-y'_2|}{|x-y_1|+|x-y_2|}\\
 &&\times \left\{ \frac{\big(2^j(d_G(x, y_1))+ d_G(x, y_2))\big)^{2d} + \big(2^j(d_G(x, y_1))+ d_G(x, y_2))\big)^{2d_k}}{\big[1+2^j(d_G(x, y_1))+ d_G(x, y_2))\big]^{L}} \right\}.
   \end{eqnarray*}
Now, the verification of condition \eqref{multiplier smtness estimate last y2 changing} can be carried out in exactly the same manner as in the verification of condition \eqref{multiplier size estimate last}, since we have actually shown that 
\begin{eqnarray*}
   &&|K(x,y_1,y_2)-K(x,y_1,y'_2)|\\
    &\leq& C\, \big[\mu_k\big(B(x, d_G(x,y_1))\big)+\mu_k\big(B(x, d_G(x,y_2))\big)\big]^{-2}\frac{|y_2-y'_2|}{|x-y_1|+|x-y_2|}\\
 &&\times\Big\{\sum\limits_{j\in \mathbb{Z}: 2^j(d_G(x, y_1))+ d_G(x, y_2))\leq 1}\big(2^j(d_G(x, y_1))+ d_G(x, y_2))\big)^{2d}\\
&&+ \sum\limits_{j\in \mathbb{Z}: 2^j(d_G(x, y_1))+ d_G(x, y_2))> 1}\big(2^j(d_G(x, y_1))+ d_G(x, y_2))\big)^{2d_k-L}\Big\}.
 \end{eqnarray*}
 \par The verification of conditions \eqref{multiplier smtness estimate last y1 changing} and \eqref{multiplier smtness estimate last x changing} is similar to that of \eqref{multiplier smtness estimate last y2 changing}, and hence is not included in this article.
\end{proof}
\begin{rem}
It is worth noting that a more general theorem than Theorem~\ref{MAIN bddnees of 3paraproducts} follows from \cite[Theorem 5.2]{SumanWBMTIDSVSI} and \cite[Theorem 5.3]{SumanWBMTIDSVSI}, which establish one-weight and two-weight inequalities involving $G$-invariant multiple Muckenhoupt weights. However, to keep the paper concise, we do not introduce weights here and restrict our attention to the unweighted case.
\end{rem}

\section{Proofs of the fractional Leibniz rules}\label{sec frac Leib rule}
\subsection{Paraproduct decomposition}\label{sec paraproduct decompo}
We now show that the product of two Schwartz functions $f$ and $g$ can be decomposed into a sum of paraproducts, as stated in the next proposition. Before that, we present a technical lemma, a version of which is already known (see \cite{WrobelABMVAFC} or \cite{SumanWBMTIDSVSI}), but for the sake of completeness, we include a proof here. This lemma will be used in the forthcoming proposition.

\begin{lem}\label{conv support lemma}
 Let $j \in \mathbb{Z}$, and let $\psi$ and $\phi$ be Schwartz functions such that $\psi$ is supported in $\{\xi \in \mathbb{R}^d : 2^{j-1} \leq |\xi| \leq 2^{j+1} \}$, and $\phi$ is supported in $\{\xi \in \mathbb{R}^d : |\xi| \leq 2^{j-3} \}$. Then their convolution $\psi *_k \phi$ is supported in $\{\xi \in \mathbb{R}^d : 2^{j-2} \leq |\xi| \leq 2^{j+2} \}$. 
\end{lem}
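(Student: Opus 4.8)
The plan is to reduce the statement to the support behaviour of a single Dunkl translation and then to exploit the fact that the reflections in $G$ preserve the Euclidean norm. The key input is the support property of the Dunkl translation operator: if $h$ is a Schwartz function with $\operatorname{supp} h \subseteq \overline{B(0,r)}$, then $\tau^k_z h$ is supported in the orbit of the ball $B(z,r)$, that is, in $\{w \in \mathbb{R}^d : d_G(z,w) \le r\}$. This is the standard support estimate for generalized translations underlying the arguments of \cite{WrobelABMVAFC} and \cite{SumanWBMTIDSVSI}, and it is precisely the ingredient that replaces the classical inclusion $\operatorname{supp}(\psi * \phi) \subseteq \operatorname{supp}\psi + \operatorname{supp}\phi$.

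First I would expand the Dunkl convolution directly from its definition, writing
\[
\psi *_k \phi(x) = \int_{\mathbb{R}^d} \psi(y)\, \tau^k_x \phi(-y)\, d\mu_k(y).
\]
For a given $x$ to lie in the support of $\psi *_k \phi$, the integrand must be nonzero for some $y$, and this forces two conditions simultaneously: $y \in \operatorname{supp}\psi$, hence $2^{j-1} \le |y| \le 2^{j+1}$; and $-y \in \operatorname{supp}(\tau^k_x \phi)$, hence $d_G(x,-y) \le 2^{j-3}$ by the support estimate above applied with $r = 2^{j-3}$.

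Next I would convert the orbit-distance constraint into a constraint on the radii. By definition of $d_G$ there is $\sigma \in G$ with $|\sigma x + y| \le 2^{j-3}$; since every $\sigma \in G$ is orthogonal we have $|\sigma x| = |x|$, so the reverse triangle inequality gives $\bigl|\,|x|-|y|\,\bigr| = \bigl|\,|\sigma x|-|y|\,\bigr| \le |\sigma x + y| \le 2^{j-3}$. Combining this with $2^{j-1} \le |y| \le 2^{j+1}$ yields $|x| \ge 2^{j-1} - 2^{j-3} = 3\cdot 2^{j-3} > 2^{j-2}$ and $|x| \le 2^{j+1} + 2^{j-3} = 17\cdot 2^{j-3} < 2^{j+2}$, which is exactly the claimed annulus. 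The main obstacle is not the elementary dyadic arithmetic but rather pinning down the support statement for the Dunkl translation in the generality needed (a general, not necessarily radial, Schwartz function) and phrasing it correctly in terms of the orbit distance $d_G$. Once that support lemma is available, the norm-preservation of $G$ does all the remaining work, and the argument is otherwise a direct transcription of the classical annulus-times-ball support computation.
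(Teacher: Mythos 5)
Your proof is correct and follows essentially the same route as the paper: both expand $\psi *_k \phi$ via the definition of the Dunkl convolution, invoke the known support theorem for Dunkl translations of compactly supported functions (your orbit-distance statement is exactly \cite[Theorem 1.7]{HejnaHMT}, one of the two references the paper cites), and finish with the same dyadic arithmetic. The only cosmetic difference is that the paper uses the spherical-shell form $\bigl|\,|x|-2^{j-3}\bigr| \leq |y| \leq |x|+2^{j-3}$ directly, whereas you use the $d_G$-form and recover the shell bound via orthogonality of the reflections.
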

\begin{proof}
From definition of of Dunkl convolution and using the support of $\psi$, we have the expression
\begin{equation*}
 \psi *_k \phi (x)= \int\limits_{2^{j-1} \leq |y| \leq 2^{j+1} }  \tau^k_x \phi(-y)\, \psi(y)\,d\mu_k(y). 
\end{equation*}

Now from \cite[Theorem 5.1]{AmriTRIDA} (see also \cite[Theorem 1.7]{HejnaHMT}), it follows that
$${\rm supp} \, \tau^k_x \phi(-\cdot) \subset \{y \in \mathbb{R}^d: |x|- 2^{j-3}\leq |y|\leq |x|+2^{j-3}\}.$$
So, if $y\in {\rm supp}\, \tau^k_x \phi(-\cdot)$ and $x \notin \{\xi \in \mathbb{R}^d : 2^{j-2} \leq |\xi| \leq 2^{j+2} \}$, then either $|y|<2^{j-1}$ or $|y|>2^{j+1}$. But this implies that the above integral is zero. Hence, the proof follows.
\end{proof}
\begin{prop}\label{decompo of fg in paraproduct}
Let $f, g\in \mathcal{S}(\mathbb{R}^d )$, then $fg$ can be written as 
$$(fg)(x)= \Pi_1(f, g)(x) + \Pi_2(f, g)(x) + \Pi_3(f, g)(x),$$
 where $\Pi_1=\Pi[\theta^{(1)} , \psi^{(1)}, \phi^{(1)}]$ for some $\psi^{(1)}, \phi^{(1)}, \theta^{(1)} \in C_c^\infty(\mathbb{R}^d)$  such that $${\rm supp}\, \psi^{(1)} \subset \{\xi \in \mathbb{R}^d: 1/2\leq |\xi|\leq 2\}, {\rm supp}\,\phi^{(1)} \subset \{\xi \in \mathbb{R}^d: 2^{-5}\leq |\xi|\leq 2^{5}\},$$
 $$ {\rm supp}\,\theta^{(1)} \subset \{\xi \in \mathbb{R}^d: |\xi|\leq 2^{7}\} \text{ with } 0\leq \theta^{(1)}\leq 1;$$
 $\Pi_2=\Pi[\theta^{(2)}, \psi^{(2)}, \phi^{(2)}]$ for some $\psi^{(2)}, \phi^{(2)}, \theta^{(2)} \in C_c^\infty(\mathbb{R}^d)$  such that $${\rm supp}\, \psi^{(2)} \subset \{\xi \in \mathbb{R}^d: 1/2\leq |\xi|\leq 2\},
{\rm supp}\,\phi^{(2)} \subset \{\xi \in \mathbb{R}^d: |\xi|\leq 2^{-3}\},$$ 
 $$ {\rm supp}\,\theta^{(2)} \subset \{\xi \in \mathbb{R}^d: 2^{-3}\leq |\xi|\leq 2^{3}\};$$
 and $\Pi_3=\Pi[\theta^{(3)}, \psi^{(3)}, \phi^{(3)}]$ for some $\psi^{(3)}, \phi^{(3)}, \theta^{(3)} \in C_c^\infty(\mathbb{R}^d)$  such that  
 $${\rm supp}\, \psi^{(3)} \subset \{\xi \in \mathbb{R}^d: |\xi|\leq 2^{-3}\}, {\rm supp}\,\phi^{(3)} \subset \{\xi \in \mathbb{R}^d: 1/2 \leq |\xi|\leq 2\},$$ 
 $$ {\rm supp}\, \theta^{(3)} \subset \{\xi \in \mathbb{R}^d: 2^{-3}\leq |\xi|\leq 2^{3}\}.$$
\end{prop}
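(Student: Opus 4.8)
The plan is to transplant the classical Littlewood--Paley paraproduct decomposition of Muscalu--Pipher--Tao--Thiele into the Dunkl setting, using the Dunkl reconstruction formula to split $fg$ into frequency regions and Lemma~\ref{conv support lemma} to control the supports. First I would fix a radial $\psi\in C_c^\infty(\mathbb{R}^d)$ with ${\rm supp}\,\psi\subset\{1/2\le|\xi|\le 2\}$ and $\sum_{j\in\mathbb{Z}}\psi_j(\xi)=1$ for all $\xi\neq 0$, where $\psi_j(\xi)=\psi(\xi/2^j)$. Since $\mu_k(\{0\})=0$, Plancherel's theorem gives the Dunkl Littlewood--Paley reconstructions $f=\sum_j\Psi_j*_k f$ and $g=\sum_l\Psi_l*_k g$, and expanding the product yields
\[
fg=\sum_{j,l\in\mathbb{Z}}(\Psi_j*_k f)(\Psi_l*_k g),
\]
the Schwartz decay of $\Psi_j*_k f$ and $\Psi_l*_k g$ in $j,l$ justifying the rearrangement. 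I would then split $\mathbb{Z}^2$ into the three regions $l\le j-4$, $j\le l-4$, and $|j-l|\le 3$, which tile $\mathbb{Z}^2$ and will produce $\Pi_2$, $\Pi_3$, and $\Pi_1$ respectively.

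Next I would repackage each region as one paraproduct. For the ``high--low'' region I set $\psi^{(2)}=\psi$ and $\phi^{(2)}=\sum_{m\ge 4}\psi(2^m\cdot)$, a low-pass function with ${\rm supp}\,\phi^{(2)}\subset\{|\xi|\le 2^{-3}\}$, so that $\phi^{(2)}_j=\sum_{l\le j-4}\psi_l$ and hence $\Phi^{(2)}_j*_k g=\sum_{l\le j-4}\Psi_l*_k g$. The ``low--high'' region is handled symmetrically by interchanging the two slots (giving $\psi^{(3)}=\sum_{m\ge4}\psi(2^m\cdot)$ and $\phi^{(3)}=\psi$), and the diagonal region is collected through $\phi^{(1)}=\sum_{|m|\le 3}\psi(2^m\cdot)$, whose support lies in $\{2^{-5}\le|\xi|\le 2^5\}$. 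With these choices one checks directly that
\[
\sum_j (\Psi^{(2)}_j*_k f)(\Phi^{(2)}_j*_k g)=\sum_{l\le j-4}(\Psi_j*_k f)(\Psi_l*_k g),
\]
and analogously for the other two regions, so the only remaining task is to insert the smoothing factors $\Theta^{(i)}_j$ without altering these sums.

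The heart of the matter, and the step I expect to be the main obstacle, is verifying that each $\Theta^{(i)}_j$ acts as the identity, i.e. $\Theta^{(i)}_j*_k\big[(\Psi^{(i)}_j*_k f)(\Phi^{(i)}_j*_k g)\big]=(\Psi^{(i)}_j*_k f)(\Phi^{(i)}_j*_k g)$. By the convolution theorem this amounts to arranging $\theta^{(i)}_j\equiv 1$ on the Dunkl-frequency support of the product $(\Psi^{(i)}_j*_k f)(\Phi^{(i)}_j*_k g)$. Here I would use the product-to-convolution identity $\mathcal{F}_k(uv)=\mathcal{F}_k u*_k\mathcal{F}_k v$ for Schwartz $u,v$, which follows from the convolution theorem $\mathcal{F}_k(F*_k G)=\mathcal{F}_kF\,\mathcal{F}_kG$ applied to $F=\mathcal{F}_k u$, $G=\mathcal{F}_k v$ together with the Dunkl inversion formula. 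Taking $u=\Psi^{(2)}_j*_k f$ and $v=\Phi^{(2)}_j*_k g$, we have $\mathcal{F}_k u=\psi_j\mathcal{F}_k f$ supported in the annulus $\{2^{j-1}\le|\xi|\le 2^{j+1}\}$ and $\mathcal{F}_k v=\phi^{(2)}_j\mathcal{F}_k g$ supported in the ball $\{|\xi|\le 2^{j-3}\}$; Lemma~\ref{conv support lemma} then gives ${\rm supp}\,\mathcal{F}_k(uv)\subset\{2^{j-2}\le|\xi|\le 2^{j+2}\}$, so it suffices to choose $\theta^{(2)}\equiv 1$ on $\{2^{-2}\le|\xi|\le 2^2\}$ with ${\rm supp}\,\theta^{(2)}\subset\{2^{-3}\le|\xi|\le 2^3\}$, exactly as required. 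The diagonal term $\Pi_1$ instead needs the companion ``ball $*_k$ ball $\subset$ ball'' support statement, a direct consequence of the same Dunkl-translation support theorem used to prove Lemma~\ref{conv support lemma}; it places ${\rm supp}\,\mathcal{F}_k(uv)$ in $\{|\xi|\le 2^{j+5}\}$ and permits $\theta^{(1)}$ to be a low-pass cutoff with ${\rm supp}\,\theta^{(1)}\subset\{|\xi|\le 2^7\}$ and $0\le\theta^{(1)}\le 1$.

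Finally I would reassemble: since inserting the reproducing $\Theta^{(i)}_j$ leaves each regional sum unchanged, the three regional sums are precisely $\Pi_1(f,g)$, $\Pi_2(f,g)$, $\Pi_3(f,g)$, and summing over the partition of $\mathbb{Z}^2$ recovers $fg=\Pi_1(f,g)+\Pi_2(f,g)+\Pi_3(f,g)$. The routine points to be careful about are the convergence and rearrangement of the double sum in the topology needed downstream, and the threshold bookkeeping so that the three regions tile $\mathbb{Z}^2$ while keeping ${\rm supp}\,\phi^{(i)}$ and ${\rm supp}\,\theta^{(i)}$ inside the prescribed annuli and balls. The genuinely non-classical ingredient is the support transfer for $\mathcal{F}_k(uv)$: this is where the absence of a simple translation structure in Dunkl analysis is absorbed, precisely via Lemma~\ref{conv support lemma} and the Dunkl-translation support theorem underlying it.
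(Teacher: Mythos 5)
Your proposal is correct and follows essentially the same route as the paper: the same Littlewood--Paley partition of unity, the same split into diagonal, high--low, and low--high regions (the paper phrases it inside the double Dunkl-inversion integral rather than as a rearranged double sum, and uses cutoff $|j_1-j_2|\le 4$ instead of your $\le 3$, but this is cosmetic), the same repackaging into $\psi^{(i)},\phi^{(i)}$, and the same key step of inserting the reproducing factor $\Theta^{(i)}_j$ justified by the product-to-convolution identity together with Lemma~\ref{conv support lemma} for the off-diagonal terms and the ball-$*_k$-ball support statement for the diagonal term. The one point you flag as needing care — transferring support information through $\mathcal{F}_k(uv)=\mathcal{F}_k u *_k \mathcal{F}_k v$ and the Dunkl-translation support theorem of Amri--Anker--Sifi — is exactly how the paper argues, so there is no gap.
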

\begin{proof}
In the proof, we will follow a strategy introduced in the book by Muscalu-Schlag \cite{MuscaluBook2CAMHA}.
\par Let $\psi\in C^{\infty}(\mathbb{R}^d)$ be such that ${\rm supp}\, \psi \subset \{\xi\in\mathbb{R}^d: 1/2\leq |\xi|\leq 2\}$ and 
$$\sum\limits_{j\in\mathbb{Z}}\psi_j(\xi)=1 \text{ for all } \xi\neq 0,$$
where $\psi_j(\xi)=\psi(\xi/2^j)$ for all $\xi\in\mathbb{R}^d.$
Then 
\begin{eqnarray*}
(fg)(x)&=&\int_{\mathbb{R}^{2d}}\sum\limits_{j_1\in\mathbb{Z}}\sum\limits_{j_2\in\mathbb{Z}} \psi_{j_1}(\xi)\psi_{j_2}(\eta) \mathcal{F}_kf(\xi)\mathcal{F}_kg(\eta)E_k(ix,\xi) E_k(ix,\eta)\,d\mu_k(\xi)d\mu_k(\eta)\\
&=&\int_{\mathbb{R}^{2d}}\sum\limits_{|j_1-j_2|\leq 4}\cdots +\int_{\mathbb{R}^{2d}}\sum\limits_{j_1>j_2+4}\cdots +\int_{\mathbb{R}^{2d}}\sum\limits_{j_2>j_1+4}\cdots\\
&=:&\Pi_1(f, g)(x) + \Pi_2(f, g)(x) + \Pi_3(f, g)(x).
\end{eqnarray*}

Now we can explicitly compute $\Pi_1(f, g)$ as
\begin{eqnarray*}
\Pi_1(f, g)(x)&=&\int_{\mathbb{R}^{2d}}\sum\limits_{|j_1-j_2|\leq 4} \psi_{j_1}(\xi)\psi_{j_2}(\eta) \mathcal{F}_kf(\xi)\mathcal{F}_kg(\eta)E_k(ix,\xi) E_k(ix,\eta)\,d\mu_k(\xi)d\mu_k(\eta)\\
&=& \int_{\mathbb{R}^{2d}}\sum\limits_{j\in \mathbb{Z}}\psi_j(\xi)\sum\limits_{j_2:|j-j_2|\leq 4}\psi_{j_2}(\eta)\mathcal{F}_kf(\xi)\mathcal{F}_kg(\eta)E_k(ix,\xi) E_k(ix,\eta)\,d\mu_k(\xi)d\mu_k(\eta)\\
&= : &\sum\limits_{j\in \mathbb{Z}} \int_{\mathbb{R}^{2d}}\psi^{(1)}_j(\xi)\, \phi^{(1)}_j(\eta)\mathcal{F}_kf(\xi)\mathcal{F}_kg(\eta)E_k(ix,\xi) E_k(ix,\eta)\,d\mu_k(\xi)d\mu_k(\eta),
\end{eqnarray*}
where $\psi^{(1)}(\xi)=\psi(\xi)$, $\psi^{(1)}_j(\xi)=\psi^{(1)}(\xi/2^{j})$, $ \phi^{(1)}(\eta)=\sum\limits_{|j|\leq 4}\psi_j(\eta)$, and $ \phi^{(1)}_j(\eta)= \phi^{(1)}(\eta/2^j)$.
\par Then clearly ${\rm supp}\,  \phi^{(1)}\subset \{\xi\in\mathbb{R}^d: 2^{-5}\leq |\xi|\leq 2^5\}$ and we can write
$$\Pi_1(f, g)(x)=\sum\limits_{j\in \mathbb{Z}} \big((\Psi^{(1)}_j*_kf)(x)(\Phi^{(1)}_j*_kg)(x)\big).$$
Now, to obtain the exact expression for $\Pi_1(f, g)$ as in the statement of the theorem, we need to work a bit more by using the properties of Dunkl convolution and the support of the functions $\psi^{(1)}$ and $\phi^{(1)}$. In fact, let us introduce a new function $\theta^{(1)} \in C_c^{\infty}(\mathbb{R}^d )$ such that $0\leq \theta^{(1)}\leq 1$, ${\rm supp}\, \theta^{(1)}\subset\{\xi\in \mathbb{R}^d:|\xi|\leq 2^{7}\}$, $\theta^{(1)}(\xi)=1$ for $|\xi|\leq 2^{6}$, and for any $j\in \mathbb{Z}$, define $\theta^{(1)}_j(\xi)=\theta^{(1)} (\xi/2^j)$. We claim that for any $j\in \mathbb{Z}$,
\begin{equation}\label{play again with conv}
(\Psi^{(1)}_j*_kf)(x)(\Phi^{(1)}_j*_kg)(x)=\Theta^{(1)}_j *_k\big((\Psi^{(1)}_j*_kf)(\cdot)(\Phi^{(1)}_j*_kg)(\cdot)\big)(x),
\end{equation}
and hence, we can write
\begin{equation*}
\Pi_1(f, g)(x)=\sum\limits_{j\in \mathbb{Z}} \Theta^{(1)}_j *_k\big((\Psi^{(1)}_j*_kf)(\cdot)(\Phi^{(1)}_j*_kg)(\cdot)\big)(x).
\end{equation*}

By uniqueness of the Dunkl transform, to prove \eqref{play again with conv}, we need to show that
$$\mathcal{F}_k\big((\Psi^{(1)}_j*_kf)(\cdot)(\Phi^{(1)}_j*_kg)(\cdot)\big)(\xi) = \mathcal{F}_k\big(\Theta^{(1)}_j *_k\big((\Psi^{(1)}_j*_kf)(\cdot)(\Phi^{(1)}_j*_kg)(\cdot)\big)\big)(\xi) $$
which, after simplification, becomes
\begin{equation}\label{play again with Dunkl conv part2}
(\psi^{(1)}_j(\cdot)\mathcal{F}_kf(\cdot)\big)*_k (\phi^{(1)}_j(\cdot)\mathcal{F}_kg(\cdot)\big)(\xi)= \theta^{(1)}_j(\xi) (\psi^{(1)}_j(\cdot)\mathcal{F}_kf(\cdot)\big)*_k (\phi^{(1)}_j(\cdot)\mathcal{F}_kg(\cdot)\big)(\xi).  
\end{equation}
Again since $\psi^{(1)}_j$ is supported in $\{\xi \in \mathbb{R}^d:  2^{j-1}\leq |\xi|\leq 2^{j+1}\}$ and $\phi^{(1)}_j$ is supported in $\{\xi \in \mathbb{R}^d: |\xi|\leq 2^{j+5}\}$, it is easy to see that (similar to the proof of Lemma \ref{conv support lemma})
$$supp\, (\psi^{(1)}_j(\cdot)\mathcal{F}_kf(\cdot)\big)*_k (\phi^{(1)}_j(\cdot)\mathcal{F}_kg(\cdot)\big) \subset \{\xi \in \mathbb{R}^d:  |\xi|\leq 2^{j+6}\}.$$
But $\theta^{(1)}_j(\xi)=1$ on $  |\xi|\leq 2^{j+6}$. This concludes the proof of \eqref{play again with Dunkl conv part2}.

\par For $\Pi_2(f, g)$, we similarly can write
\begin{eqnarray*}
\Pi_2(f, g)(x)&=&\int_{\mathbb{R}^{2d}}\sum\limits_{j_1>j_2 + 4} \psi_{j_1}(\xi)\psi_{j_2}(\eta) \mathcal{F}_kf(\xi)\mathcal{F}_kg(\eta)E_k(ix,\xi) E_k(ix,\eta)\,d\mu_k(\xi)d\mu_k(\eta)\\
&=& \int_{\mathbb{R}^{2d}}\sum\limits_{j\in \mathbb{Z}}\psi_j(\xi)\sum\limits_{j_2:j_2< j- 4}\psi_{j_2}(\eta)\mathcal{F}_kf(\xi)\mathcal{F}_kg(\eta)E_k(ix,\xi) E_k(ix,\eta)\,d\mu_k(\xi)d\mu_k(\eta)\\
&=:&\sum\limits_{j\in \mathbb{Z}} \int_{\mathbb{R}^{2d}}\psi^{(2)}_j(\xi)\phi^{(2)}_j(\eta)\mathcal{F}_kf(\xi)\mathcal{F}_kg(\eta)E_k(ix,\xi) E_k(ix,\eta)\,d\mu_k(\xi)d\mu_k(\eta),
\end{eqnarray*}
where $\psi^{(2)}(\xi)=\psi(\xi)$, $\psi^{(2)}_j(\xi)=\psi^{(2)}(\xi/2^{j})$, $\phi^{(2)}(\eta)=\sum\limits_{j<- 4}\psi_j(\eta)$ and $\phi^{(2)}_j(\eta)=\phi^{(2)}(\eta/2^j)$.
\par Then obviously ${\rm supp}\, \phi^{(2)} \subset \{\xi\in\mathbb{R}^d:  |\xi|\leq 2^{-3}\}$ and 
$$\Pi_2(f, g)(x)=\sum\limits_{j\in \mathbb{Z}} \big((\Psi^{(2)}_j*_kf)(x)(\Phi^{(2)}_j*_kg)(x)\big).$$ 
In this case also, we introduce a new function $\theta^{(2)} \in C_c^{\infty}(\mathbb{R}^d )$ such that ${\rm supp}\, \theta^{(2)}\subset\{\xi\in \mathbb{R}^d:2^{-3}\leq |\xi|\leq 2^3\}$, $\theta^{(2)}(\xi)=1$ for $2^{-2}\leq |\xi|\leq 2^2$, and for any $j\in \mathbb{Z}$, define $\theta^{(2)}_j(\xi)=\theta^{(2)} (\xi/2^j)$. We claim that for any $j\in \mathbb{Z}$,
\begin{equation}\label{play with conv}
(\Psi^{(2)}_j*_kf)(x)(\Phi^{(2)}_j*_kg)(x)=\Theta^{(2)}_j *_k\big((\Psi^{(2)}_j*_kf)(\cdot)(\Phi^{(2)}_j*_kg)(\cdot)\big)(x),
\end{equation}
and hence, we can write
\begin{equation*}
\Pi_2(f, g)(x)=\sum\limits_{j\in \mathbb{Z}} \Theta^{(2)}_j *_k\big((\Psi^{(2)}_j*_kf)(\cdot)(\Phi^{(2)}_j*_kg)(\cdot)\big)(x).
\end{equation*}
Similar to the previous case, the proof of \eqref{play with conv} requires showing that
\begin{equation}\label{play with Dunkl conv part2}
(\psi^{(2)}_j(\cdot)\mathcal{F}_kf(\cdot)\big)*_k (\phi^{(2)}_j(\cdot)\mathcal{F}_kg(\cdot)\big)(\xi)= \theta^{(2)}_j(\xi) (\psi^{(2)}_j(\cdot)\mathcal{F}_kf(\cdot)\big)*_k (\phi^{(2)}_j(\cdot)\mathcal{F}_kg(\cdot)\big)(\xi).  
\end{equation}
Here also, since $\psi^{(2)}_j$ is supported in $\{\xi \in \mathbb{R}^d: 2^{j-1}\leq |\xi|\leq 2^{j+1}\}$ and $\phi^{(2)}_j$ is supported in $\{\xi \in \mathbb{R}^d: |\xi|\leq 2^{j-3}\}$, by using Lemma \ref{conv support lemma} one can check that
$${\rm supp}\, (\psi^{(2)}_j(\cdot)\mathcal{F}_kf(\cdot)\big)*_k (\phi^{(2)}_j(\cdot)\mathcal{F}_kg(\cdot)\big) \subset \{\xi \in \mathbb{R}^d: 2^{j-2}\leq |\xi|\leq 2^{j+2}\}.$$
But $\theta^{(2)}_j(\xi)=1$ on $2^{j-2}\leq |\xi|\leq 2^{j+2}$. Hence, the proof of \eqref{play with Dunkl conv part2} is also complete.
\par The proof for $\Pi_3(f, g)$ can be done in exactly the same manner as for $\Pi_2(f, g)$ and is therefore omitted.
 \end{proof}

\subsection{Proofs of the main theorem}\label{sec proof of the main theorem}
In this final section, we first show that the $s$-th fractional power of the Dunkl Laplacian applied to any Schwartz function belongs to $L^p$ for suitable ranges of $s$ and $p$, and then proceed to prove Theorem \ref{frac leib rule main thm}. The next proposition serves as a substitute for the decay estimates associated with the classical fractional Laplacian \cite[Theorem 7.6.2]{GrafakosModernBook} or \cite[Lemma 1]{GrafakosTKPI}.
\begin{prop}\label{thm decay of the fractional Dunkl derivatives}
Given any $f\in \mathcal{S}(\mathbb{R}^d )$ and $s>0$,
we have 
$$|(-\Delta_k)^s(f)(x)|\lesssim |x|^{-(d_k+2s)}$$
for any $|x|>1$.
\end{prop}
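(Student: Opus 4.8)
The plan is to represent $(-\Delta_k)^s$ through the Dunkl heat semigroup and then exploit the Gaussian bounds \eqref{integration heta kerenl is 1}--\eqref{decay esti of heta kerenl with mod}. Fix an integer $n>s$ and set $g:=(-\Delta_k)^n f=(-1)^n\Delta_k^n f$. Since $f\in\mathcal S(\mathbb R^d)$ and $\Delta_k$ preserves the Schwartz class, $g\in\mathcal S(\mathbb R^d)$ and, crucially, $\mathcal F_k g(\xi)=|\xi|^{2n}\mathcal F_k f(\xi)$ vanishes to order $2n$ at the origin. Starting from the scalar identity $\lambda^s=\frac{1}{\Gamma(n-s)}\int_0^\infty t^{n-s-1}\lambda^n e^{-t\lambda}\,dt$ and reading it through the Dunkl-transform calculus of $-\Delta_k$, I would first justify the subordination formula
\begin{equation*}
(-\Delta_k)^s f(x)=\frac{1}{\Gamma(n-s)}\int_0^\infty t^{n-s-1}\,e^{t\Delta_k}g(x)\,dt,
\end{equation*}
by checking that both sides have Dunkl transform $|\xi|^{2s}\mathcal F_k f(\xi)$ and that the integral converges absolutely. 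I then split the $t$-integral at the natural scale $t=|x|^2$.

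For the large-time part $t>|x|^2$ I would argue on the transform side: since $|E_k(i\xi,x)|\le 1$,
\begin{equation*}
|e^{t\Delta_k}g(x)|\le\int_{\mathbb R^d}e^{-t|\xi|^2}\,|\xi|^{2n}\,|\mathcal F_k f(\xi)|\,d\mu_k(\xi)\lesssim t^{-d_k/2-n},
\end{equation*}
the last estimate following from the rescaling $\xi\mapsto t^{-1/2}\xi$ together with $\mu_k(B(0,R))\sim R^{d_k}$ from \eqref{Volofball}. Inserting this bound gives $\int_{|x|^2}^\infty t^{n-s-1}t^{-d_k/2-n}\,dt\sim|x|^{-(d_k+2s)}$, which is exactly the claimed rate; note it is precisely the extra decay $t^{-n}$, produced by the vanishing of $\mathcal F_k g$ at the origin, that makes the integer part of $s$ enter the exponent.

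For the small-time part $0<t\le|x|^2$ I would use the pointwise kernel bound $h_t(x,y)\lesssim t^{-d_k/2}\exp(-d_G(x,y)^2/ct)$ and split the $y$-integral in $e^{t\Delta_k}g(x)=\int h_t(x,y)g(y)\,d\mu_k(y)$ according to $|y|>|x|/2$ and $|y|\le|x|/2$. On the far region $|g(y)|\lesssim_M|x|^{-M}$, and $\int h_t\,d\mu_k=1$ yields a contribution $\lesssim_M|x|^{-M}$, which after integration against $t^{n-s-1}$ over $(0,|x|^2)$ is $\lesssim|x|^{-M+2(n-s)}$ and hence negligible once $M$ is large. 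On the near region one has $d_G(x,y)\ge|x|/2$, so the heat kernel already carries the factor $\exp(-|x|^2/4ct)$; however, a bare size estimate here only produces $|x|^{2(n-s)-d_k}$, which is too large by the factor $|x|^{2n}$.

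The main obstacle is exactly this near-origin, small-time term, and overcoming it forces the use of the cancellation encoded in $\mathcal F_k g$ vanishing to order $2n$ at $0$ — equivalently, that $g$ annihilates the Dunkl analogues of polynomials of degree $<2n$. Concretely, I would subtract from $y\mapsto h_t(x,y)$ its Dunkl--Taylor polynomial at $y=0$ of order $2n-1$: the polynomial part integrates to zero against $g$, while the remainder is controlled by a $(2n)$-th order regularity estimate for the heat kernel in $y$, producing a gain $(|y|/\sqrt t)^{2n}$ and retaining the Gaussian factor $\exp(-c|x|^2/t)$. Integrating $t^{n-s-1}\,t^{-d_k/2}(|y|/\sqrt t)^{2n}\exp(-c|x|^2/t)$ first in $y$ and then in $t\in(0,|x|^2)$ lowers the exponent by $2n$ and lands on $|x|^{-(d_k+2s)}$. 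The delicate point, absent in the classical case, is that $h_t(x,y)=\tau^k_x h_t(-y)$ is not a function of $x-y$, so both the Taylor expansion and the moment cancellation must be phrased through the Dunkl structure (the expansion of $E_k$ and higher-order difference estimates for $\tau^k_x h_t$, in the spirit of Proposition~\ref{smooth function all decay}). Assembling the large-time, far small-time, and near small-time contributions then yields $|(-\Delta_k)^s f(x)|\lesssim|x|^{-(d_k+2s)}$ for $|x|>1$.
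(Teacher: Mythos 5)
Your skeleton (subordination through the heat semigroup, splitting at $t=|x|^2$, and separating $|y|\le |x|/2$ from $|y|>|x|/2$) is sound, and the large-time, far-region, and moment-vanishing claims all check out. But the proof has a genuine gap exactly where you flag "the delicate point": the near-origin, small-time term. Your plan requires a $(2n)$-th order Taylor-remainder estimate for $y\mapsto h_t(x,y)$ that \emph{simultaneously} gains the factor $(|y|/\sqrt t)^{2n}$ \emph{and} retains the Gaussian factor $\exp(-c\,d_G(x,y)^2/t)$. No such estimate is available: higher-order smoothness of Dunkl translations is precisely what is hard in this theory (even the first-order estimate in Proposition~\ref{smooth function all decay}\ref{difference decay} takes work and yields only polynomial decay), and the easy spectral-side bound $|\partial_y^\beta h_t(x,y)|\le\int e^{-t|\xi|^2}|\xi|^{|\beta|}\,d\mu_k(\xi)\lesssim t^{-d_k/2-|\beta|/2}$ loses the Gaussian factor entirely — and without that factor the integral $\int_0^{|x|^2} t^{n-s-1}\,t^{-d_k/2-n}\,dt$ diverges at $t=0$. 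So the step on which the whole exponent $|x|^{-(d_k+2s)}$ hinges cannot be completed with the tools at hand; it would itself be a new (and nontrivial) result about the Dunkl heat kernel.

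The difficulty is self-inflicted: it comes from moving all $2n$ derivatives onto $f$ to form $g=(-\Delta_k)^n f$, which destroys, at the level of pointwise size estimates, the cancellation you then have to recover by moments. The paper keeps the Laplacian on the semigroup instead: it reduces to $0<s<1$, writes $(-\Delta_k)^s f = -\frac{1}{\Gamma(1-s)}\int_0^\infty t^{-s}\Delta_k e^{t\Delta_k}f\,dt$, and uses that the kernel $q_t$ of $t\Delta_k e^{t\Delta_k}$ inherits Gaussian bounds from \eqref{decay esti of heta kerenl with mod} via \cite{CD}. The extra factor $t^{-1}$ (more generally $t^{-n}$ for $\partial_t^n e^{t\Delta_k}$, since $e^{t\Delta_k}(-\Delta_k)^n f=(-1)^n\partial_t^n e^{t\Delta_k}f$) arrives \emph{together with} the Gaussian factor, so on the region $|y|\le|x|/2$, where $d_G(x,y)\gtrsim|x|$, a pure size estimate already gives $\int_0^\infty t^{-s-1-d_k/2}e^{-|x|^2/ct}\,dt\sim|x|^{-(d_k+2s)}$ — no moment cancellation, no Taylor expansion, no heat-kernel regularity in $y$. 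The remaining pieces are handled as you do: Schwartz decay of $f$ on $|y|\sim|x|$ and Gaussian decay when $|y|\gtrsim|x|$ or $|y|\lesssim|x|$. If you rework your argument by replacing $e^{t\Delta_k}g$ with $(-1)^n\partial_t^n e^{t\Delta_k}f$ and invoking Coulhon--Duong-type bounds for the time derivatives of the heat kernel, your decomposition goes through and essentially reproduces the paper's proof.
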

\begin{proof}
We recall that the fractional powers of Dunkl Laplacian is given by the formula 
$$(-\Delta_k)^{s}f(x)=\int_{\mathbb{R}^d}|\xi|^{2s}\,\mathcal{F}_kf(\xi)E_k(i\xi,x)\,d\mu_k(\xi).$$
Since the Dunkl transform is a homeomorphism on the space $\mathcal{S}(\mathbb{R}^d)$, and for any Schwartz function $f$, the function $|\cdot|^{2n} \mathcal{F}_k f$ is again a Schwartz function for any $n \in \mathbb{N}$, it suffices to prove the result for $0 < s < 1$.

\par It follows from \eqref{decay esti of heta kerenl with mod} that the heat kernel $h_t(x,y)$ satisfies the Gaussian upper bound, i.e.,  there exists $c>0$ such that 
\[
|h_t(x,y)|\lesssim \frac{1}{\mu_k(B(x,\sqrt t))}\exp\Big(-\frac{d_G(x,y)^2}{ct}\Big)
\]
for $t>0$ and $x,y\in \mathbb R^d$. This, together with \cite[Lemma 2.5]{CD}, implies that the kernel $q_t(x,y)$ of $t\Delta_ke^{ t\Delta_k}$ also satisfies the Gaussian upper bound. That is, there exists $c>0$ such that
\begin{equation}
	\label{eq-Gaussian ub for time derivative of heat kernel}
	|q_t(x,y)|\lesssim \frac{1}{\mu_k(B(x,\sqrt t))}\exp\Big(-\frac{d_G(x,y)^2}{ct}\Big)
\end{equation}
for $t>0$ and $x,y\in \mathbb R^d$.

Fix $x\in \mathbb R^d, |x|>1$. Let $\phi_j \in C^\infty_c(\mathbb R), j=1,2,3;$ be radial functions such that 
$$
{\rm supp}\, \phi_1 \subset \{\xi: |\xi|<1/2\},  \ \ \ {\rm supp}\, \phi_2 \subset \{\xi: 1/4 <|\xi|<4\}, \ \ \ {\rm supp}\, \phi_3 \subset \{\xi: |\xi|>2\}
$$
and
$$
\phi_1(\xi) +\phi_2(\xi)+\phi_3(\xi) = 1, \ \ \xi\in \mathbb R.
$$

 We have
$$
\begin{aligned}
	\left|(-\Delta_k )^sf(x) \right| &=\left| -\frac{1}{\Gamma(1-s)} \int_0^\infty t^{-s} \Delta_ke^{ t\Delta_k}f(x) \, dt \right|\\
	&=\left| -\sum_{j=1}^3\frac{1}{\Gamma(1-s)} \int_0^\infty t^{-s} \Delta_ke^{ t\Delta_k}f_j(x) \, dt \right|\\
	&=: I_1 +I_2+I_3,
\end{aligned}
$$
where $f_j(y)=f(y)\,\phi_j(|y|/|x|)$ for $j=1,2,3$.

For the first term $I_1$, using \eqref{eq-Gaussian ub for time derivative of heat kernel}, ${\rm supp}\, \phi_1 \subset \{\xi: |\xi|<1/2\}$ and the fact $d_G(x,y)\sim |x|$ as $|x|>2|y|$, we obtain
$$
\begin{aligned}
	I_1&\lesssim \int_0^\infty \int_{|x|>2|y|}\frac{1}{t^{s}}\, |q_t(x,y)|\,|f(y)|\,d\mu_k(y)\frac{dt}{t}\\
	&\lesssim \int_0^\infty \int_{|x|>2|y|}\frac{1}{t^{s+d_k/2}} \exp\Big(-\frac{|x|^2}{ct}\Big)|f(y)|\,d\mu_k(y)\frac{dt}{t}\\
	&\lesssim \|f\|_{L^1(d\mu_k)}\int_0^\infty  \frac{1}{t^{s+d_k/2}} \exp\Big(-\frac{|x|^2}{ct}\Big) \frac{dt}{t}\\
	&\lesssim |x|^{-(d_k+2s)}.
\end{aligned}
$$
Similarly,  using \eqref{eq-Gaussian ub for time derivative of heat kernel}, ${\rm supp}\, \phi_3 \subset \{\xi: |\xi|>2\}$ and the fact $d_G(x,y)\sim |y|$ as $|y|>2|x|$, we get
$$
\begin{aligned}
	I_3&\lesssim \int_0^\infty \int_{|y|>2|x|}\frac{1}{t^{s}}\, |q_t(x,y)|\,|f(y)|\,d\mu_k(y)\frac{dt}{t}\\
	&\lesssim \int_0^\infty \int_{|x|>2|y|}\frac{1}{t^{s+d_k/2}} \exp\Big(-\frac{|y|^2}{ct}\Big)|f(y)|\,d\mu_k(y)\frac{dt}{t}\\
	&\lesssim \int_0^\infty \int_{|x|>2|y|}\frac{1}{t^{s+d_k/2}} \exp\Big(-\frac{|x|^2}{ct}\Big)|f(y)|\,d\mu_k(y)\frac{dt}{t}\\
	&\lesssim \|f\|_{L^1(d\mu_k)}\int_0^\infty  \frac{1}{t^{s+d_k/2}} \exp\Big(-\frac{|x|^2}{ct}\Big) \frac{dt}{t}\\
	&\lesssim |x|^{-(d_k+2s)}.
\end{aligned}
$$
For the term $I_2$, we write
$$
\begin{aligned}
	I_2& \leq \frac{1}{\Gamma(1-s)} \int_0^1 t^{-s} |\,e^{ t\Delta_k}[\Delta_kf_2](x)| \, dt + \frac{1}{\Gamma(1-s)} \int_1^\infty t^{-s} |\,\Delta_ke^{ t\Delta_k}f_2(x)| \, dt \\
	&=: I_{21} + I_{22}.
\end{aligned}
$$
Applying \eqref{integration heta kerenl is 1},
$$
\begin{aligned}
	I_{21}&\lesssim  \int_0^1 \|e^{ t\Delta_k}[\Delta_kf_2] \|_{L^\infty}\, t^{-s}  \, dt\\
	&\lesssim \|\Delta_kf_2\|_{L^\infty} \int_0^1 \,t^{-s}  \, dt\\
	&\lesssim \|\Delta_kf_2\|_{L^\infty}.
\end{aligned}
$$
In addition, from the facts $f_2(y) = f(y)\,\phi_2(|y|/|x|)$, ${\rm supp}\, \phi_2 \subset \{\xi: 1/4 <|\xi|<4\}$ and $f\in \mathcal S(\mathbb R^d)$, we have
$$
\|\Delta_kf_2\|_{L^\infty}\lesssim |x|^{-(d_k+2s)}.
$$ 
Similarly, using \eqref{eq-Gaussian ub for time derivative of heat kernel} we also obtain that
$$
\begin{aligned}
	I_{22}&\lesssim  \int_1^\infty \|t\Delta_ke^{ t\Delta_k}f_2\|_{L^\infty}\, \frac{dt}{t^{1+s}}\\
	&\lesssim  \int_1^\infty \|f_2\|_{L^\infty} \,\frac{dt}{t^{1+s}}\\
	&\lesssim \|f_2\|_{L^\infty}\\
	&\lesssim |x|^{-(d_k+2s)}.
\end{aligned}
$$

This completes our proof.
\end{proof}
The following result arises as an immediate corollary of the preceding proposition.
\begin{cor}\label{frac deri in Lp thm}
 Let $f\in \mathcal{S}(\mathbb{R}^d )$. If $s>0$, then $(-\Delta_k)^s(f)\in L^p(\mathbb{R}^d,d\mu_k)$ for any $1\leq p\leq \infty$; and if $2s>\max\{0, d_k/p-d_k\}$, then $(-\Delta_k)^s(f)\in L^p(\mathbb{R}^d,d\mu_k)$ for any $0<p\leq \infty$.
\end{cor}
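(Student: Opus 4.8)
The plan is to deduce everything from the pointwise decay already furnished by Proposition~\ref{thm decay of the fractional Dunkl derivatives}, supplemented by a trivial $L^\infty$ bound, and then to split the $L^p(d\mu_k)$ integral according to whether $|x|\le 1$ or $|x|>1$.

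First I would observe that $(-\Delta_k)^s f\in L^\infty(\mathbb{R}^d,d\mu_k)$ for every $s>0$. Since $f$ is Schwartz, so is $\mathcal{F}_kf$; hence $|\xi|^{2s}\mathcal{F}_kf(\xi)$ decays rapidly as $|\xi|\to\infty$ and, using $\mu_k(B(0,r))\sim r^{d_k}$ from \eqref{Volofball} together with a dyadic decomposition, is integrable near the origin against $d\mu_k$ whenever $2s+d_k>0$, which always holds. Combining this with $|E_k(i\xi,x)|\le 1$ yields
$$\|(-\Delta_k)^s f\|_{L^\infty}\le \int_{\mathbb{R}^d}|\xi|^{2s}\,|\mathcal{F}_kf(\xi)|\,d\mu_k(\xi)<\infty,$$
which settles the case $p=\infty$ in both assertions.

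For $p<\infty$ I would write
$$\int_{\mathbb{R}^d}|(-\Delta_k)^s f(x)|^p\,d\mu_k(x)=\int_{|x|\le 1}|(-\Delta_k)^s f(x)|^p\,d\mu_k(x)+\int_{|x|>1}|(-\Delta_k)^s f(x)|^p\,d\mu_k(x).$$
The first term is bounded by $\|(-\Delta_k)^s f\|_{L^\infty}^p\,\mu_k(B(0,1))<\infty$. For the second, Proposition~\ref{thm decay of the fractional Dunkl derivatives} gives $|(-\Delta_k)^s f(x)|\lesssim |x|^{-(d_k+2s)}$, so it suffices to control $\int_{|x|>1}|x|^{-p(d_k+2s)}\,d\mu_k(x)$. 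Decomposing into dyadic annuli $2^j<|x|\le 2^{j+1}$ and using $\mu_k(B(0,2^{j+1}))\sim 2^{(j+1)d_k}$, this integral is dominated by $\sum_{j\ge 0}2^{\,j(d_k-p(d_k+2s))}$, which converges precisely when $p(d_k+2s)>d_k$, that is, when $2s>d_k/p-d_k$.

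Finally I would check that this single condition reproduces both claims. For $1\le p<\infty$ one has $d_k/p-d_k\le 0<2s$, so the tail converges for every $s>0$; together with the $p=\infty$ case this gives $(-\Delta_k)^s f\in L^p$ for all $1\le p\le\infty$. For $0<p<1$ the requirement is exactly $2s>d_k/p-d_k=\max\{0,\,d_k/p-d_k\}$, which is the hypothesis of the second assertion. Since the whole argument is a direct consequence of the decay estimate, I do not anticipate a genuine obstacle; the only minor point requiring attention is the $d\mu_k$-integrability of $|\xi|^{2s}\mathcal{F}_kf$ near the origin, which is controlled by the homogeneous volume growth $\mu_k(B(0,r))\sim r^{d_k}$.
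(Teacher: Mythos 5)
Your proof is correct and matches the paper's intent exactly: the paper states this corollary as an immediate consequence of Proposition~\ref{thm decay of the fractional Dunkl derivatives}, and your argument simply fills in the routine details (the trivial $L^\infty$ bound via $|E_k(i\xi,x)|\le 1$ and integrability of $|\xi|^{2s}\mathcal{F}_kf$ against $d\mu_k$, the split at $|x|=1$, and the dyadic computation using $\mu_k(B(0,r))\sim r^{d_k}$). The convergence condition $p(d_k+2s)>d_k$ you derive reproduces both assertions precisely, so there is nothing to correct.
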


\par Having established all the necessary ingredients, we are now in a position to present the proofs of our main results. These proofs rely on the boundedness of the paraproducts established in the preceding section.
\begin{proof}[{\bf Proof of Theorem \ref{frac leib rule main thm}}]
Let $\Pi_1$, $\Pi_2$, and $\Pi_3$ be as in Proposition \ref{decompo of fg in paraproduct}. Then, taking Proposition \ref{decompo of fg in paraproduct} into account, the $L^p$-norm of $(-\Delta_k)^s(fg)$ can be estimated by the sum of the $L^p$-norms of paraproduct operators applied to the functions on which the fractional Dunkl Laplacian acts, as follows.
\begin{eqnarray*}
 && \| (-\Delta_k)^s(fg)\|_{L^{p}(d\mu_k)} \\
 &\leq &  \|(-\Delta_k)^s(\Pi_1(f, g))\|_{L^{p}(d\mu_k)} + \|(-\Delta_k)^s(\Pi_2(f, g))\|_{L^{p}(d\mu_k)} + \|(-\Delta_k)^s(\Pi_3(f, g))\|_{L^{p}(d\mu_k)}\\
  &= & \|\widetilde\Pi[\widetilde\theta^{(1)}, \widetilde\psi^{(1)}, \widetilde\phi^{(1)}]((-\Delta_k)^sf,\, g)\|_{L^{p}(d\mu_k)} + \|\Pi[\widetilde\theta^{(2)}, \widetilde\psi^{(2)}, \widetilde\phi^{(2)}]((-\Delta_k)^sf,\, g)\|_{L^{p}(d\mu_k)}\\
  &&+ \|\Pi[\widetilde\theta^{(3)}, \widetilde\psi^{(3)}, \widetilde\phi^{(3)}](f,\,(-\Delta_k)^s g)\|_{L^{p}(d\mu_k)}, 
\end{eqnarray*}
where in the last step we have used the following
\begin{enumerate}[label= (P\arabic*)]
    \item\label{change in paraproducst functions 1} $(-\Delta_k)^s(\Pi_1(f, g))= \widetilde\Pi[\widetilde\theta^{(1)}, \widetilde\psi^{(1)}, \widetilde\phi^{(1)}]((-\Delta_k)^sf,\, g)$, for some $\widetilde\psi^{(1)}, \widetilde\phi^{(1)} \in C_c^\infty(\mathbb{R}^d)$ and a suitable function $ \widetilde\theta^{(1)}$ such that ${\rm supp}\,\widetilde\psi^{(1)} \subset \{\xi \in \mathbb{R}^d: 1/2\leq |\xi|\leq 2\}$, ${\rm supp}\,\widetilde\phi^{(1)} \subset \{\xi \in \mathbb{R}^d: 2^{-5}\leq | \xi|\leq 2^{5}\}$, and ${\rm supp}\,\widetilde\theta^{(1)} \subset \{\xi \in \mathbb{R}^d:  |\xi|\leq 2^{7}\}.$
    \item \label{change in paraproducst functions 2}$(-\Delta_k)^s(\Pi_2(f, g))= \Pi[\widetilde\theta^{(2)}, \widetilde\psi^{(2)}, \widetilde\phi^{(2)}]((-\Delta_k)^sf,\, g)$, for some $\widetilde\psi^{(2)}, \widetilde\phi^{(2)}, \widetilde\theta^{(2)} \in C_c^\infty(\mathbb{R}^d)$ such that ${\rm supp}\,\widetilde\psi^{(2)} \subset \{\xi \in \mathbb{R}^d: 1/2\leq |\xi|\leq 2\}$, ${\rm supp}\,\widetilde\phi^{(2)} \subset \{\xi \in \mathbb{R}^d: |\xi|\leq 2^{-3}\}$, and ${\rm supp}\,\widetilde\theta^{(2)} \subset \{\xi \in \mathbb{R}^d: 2^{-3}\leq |\xi|\leq 2^{3}\}.$
   \item \label{change in paraproducst functions 3}$(-\Delta_k)^s(\Pi_3(f, g))= \Pi[\widetilde\theta^{(3)}, \widetilde\psi^{(3)}, \widetilde\phi^{(3)}](f, \,(-\Delta_k)^sg)$, for some $\widetilde\psi^{(3)}, \widetilde\phi^{(3)}, \widetilde\theta^{(3)} \in C_c^\infty(\mathbb{R}^d)$ such that ${\rm supp}\,\widetilde\psi^{(3)} \subset \{\xi \in \mathbb{R}^d: |\xi|\leq 2^{-3}\}$, ${\rm supp}\,\widetilde\phi^{(3)} \subset  \{\xi\in\mathbb{R}^d: 1/2\leq |\xi|\leq 2\}$, and ${\rm supp}\,\widetilde\theta^{(3)} \subset  \{\xi \in \mathbb{R}^d: 2^{-3}\leq |\xi|\leq 2^{3}\}.$
\end{enumerate}

We present the proof of \ref{change in paraproducst functions 1}; the remaining statements \ref{change in paraproducst functions 2} and \ref{change in paraproducst functions 3}, follow analogously. Indeed, form the definition 
\begin{eqnarray}\label{last step of the proof}
  (-\Delta_k)^s(\Pi_1(f, g))(x)&=&\sum\limits_{j\in \mathbb{Z}} \big((-\Delta_k)^s\Theta^{(1)}_j \big) *_k\big((\Psi^{(1)}_j*_kf)(\cdot)(\Phi^{(1)}_j*_kg)(\cdot)\big)(x)\\
  &=&\sum\limits_{j\in \mathbb{Z}} \widetilde\Theta^{(1)}_j *_k\big(2^{2js}\,(\Psi^{(1)}_j*_kf)(\cdot)(\Phi^{(1)}_j*_kg)(\cdot)\big)(x)\nonumber\\
  &=&\sum\limits_{j\in \mathbb{Z}} \widetilde\Theta^{(1)}_j *_k\big((\widetilde\Psi^{(1)}_j*_k (-\Delta_k)^s(f))(\cdot)(\Phi^{(1)}_j*_kg)(\cdot)\big)(x)\nonumber\\
  &=: & \widetilde \Pi[\widetilde\theta^{(1)}, \widetilde\psi^{(1)}, \widetilde\phi^{(1)}]((-\Delta_k)^sf,\, g)(x),\nonumber
  \end{eqnarray}
 where $\widetilde\theta^{(1)}(\xi) = |\xi|^{2s}\,\theta^{(1)}(\xi)$, $\widetilde\psi^{(1)}(\xi) = |\xi|^{-2s}\,\psi^{(1)}(\xi)$, and $\widetilde\phi^{(1)} = \phi^{(1)}$; and, as in the earlier notation, we define $\widetilde\Psi^{(1)}= \mathcal{F}^{-1}_k \widetilde\psi^{(1)}$, $\widetilde\psi^{(1)}_j(\xi) = \widetilde\psi^{(1)}(\xi/2^j)$, $\widetilde\Psi^{(1)}_j(\xi) = 2^{j d_k} \widetilde\Psi^{(1)}(2^j \xi)$, with analogous notation used for $\widetilde\phi^{(1)}$ and $\widetilde\theta^{(1)}$.
 \par To complete the proof, it remains to prove that each of the operators $\widetilde\Pi[\widetilde\theta^{(1)}, \widetilde\psi^{(1)}, \widetilde\phi^{(1)}]$, $\Pi[\widetilde\theta^{(2)}, \widetilde\psi^{(2)}, \widetilde\phi^{(2)}]$, and $\Pi[\widetilde\theta^{(3)}, \widetilde\psi^{(3)}, \widetilde\phi^{(3)}]$ are bounded operators from $L^{p_1}(\mathbb{R}^d,\,d\mu_k)\times L^{p_2}(\mathbb{R}^d,\,d\mu_k)$ to $L^{p}(\mathbb{R}^d,\, d\mu_k)$.
\par Here, we use the notation $\widetilde\Pi[\widetilde\theta^{(1)}, \widetilde\psi^{(1)}, \widetilde\phi^{(1)}]$ instead of $\Pi[\widetilde\theta^{(1)}, \widetilde\psi^{(1)}, \widetilde\phi^{(1)}]$, since the definition of paraproduct operators is given for functions in $C_c^\infty(\mathbb{R}^d)$, and $\widetilde\theta^{(1)}$ may not be smooth. However, the $L^p$ boundedness of $\widetilde{\Pi}[\widetilde{\theta}^{(1)}, \widetilde{\psi}^{(1)}, \widetilde{\phi}^{(1)}]$ can be established by an argument analogous to that used in the proof of the boundedness of paraproduct operators (Proposition \ref{bddness of general 3paraproducts}), provided that we can show
\begin{equation}\label{max func alt approach}
\sup_{j \in \mathbb{Z}} |\widetilde\Theta^{(1)}_j *_k h(x)| \lesssim M_k (|h|)(x).
\end{equation}
Here, $M_k$ denotes the \emph{Dunkl maximal operator} introduced in \cite{ThangaveluCOMF}, which is known to be bounded on $L^p(\mathbb{R}^d, d\mu_k)$ for all $1 < p < \infty$ (see \cite[Theorem 6.1]{ThangaveluCOMF}).
\par To prove \eqref{max func alt approach}, we observe that by choosing the function $\theta^{(1)}$ to be radial in Proposition \ref{decompo of fg in paraproduct}, the corresponding functions $\Theta^{(1)}$ and $\widetilde\Theta^{(1)}$ also become radial and real valued. We then use the fact that $\widetilde{\Theta} = (-\Delta_k)^s \Theta$ and invoke Proposition \ref{thm decay of the fractional Dunkl derivatives} to obtain the decay
\begin{eqnarray*}
   |\widetilde\Theta^{(1)}(x)|\lesssim (1+ |x|)^{-(d_k+2s)}.
\end{eqnarray*}
Now \eqref{max func alt approach} follows directly from \cite[Theorem 6.2]{ThangaveluCOMF}.
\par Only the boundedness of the operator $\widetilde\Pi[\widetilde\theta^{(1)}, \widetilde\psi^{(1)}, \widetilde\phi^{(1)}]$ posed a challenge, as it requires a separate proof. On the other hand, the boundedness of the other two operators, $\Pi[\widetilde\theta^{(2)}, \widetilde\psi^{(2)}, \widetilde\phi^{(2)}]$ and $\Pi[\widetilde\theta^{(3)}, \widetilde\psi^{(3)}, \widetilde\phi^{(3)}]$, follows directly from Proposition \ref{bddness of general 3paraproducts} and Corollary \ref{frac deri in Lp thm}.
\end{proof}

\section*{Acknowledgments}
 TAB was supported by  the research grant ARC DP260101083 from the Australian Research Council.  SM gratefully acknowledges Sanjay Parui for introducing this problem and B\l{}a\.{z}ej Wr\'{o}bel for clarifying several doubts. SM was supported by Institute Postdoctoral Fellowship from IIT Bombay. The authors thank both reviewers for their careful reading and for their important comments and suggestions which helped to improve the paper.


\end{document}